\newcommand{\rank}{\mathop\mathrm{rank}}
\newcommand{\spn}{\mathop\mathrm{span}}
\newcommand{\co}{\mathop\mathrm{co}}
\newtheorem{thm}{Theorem}[section]
\newtheorem{theorem}[thm]{Theorem}
\newtheorem{corollary}[thm]{Corollary}
\newtheorem{notation}[thm]{Notation}
\newtheorem{lemma}[thm]{Lemma}
\newtheorem{proposition}[thm]{Proposition}
\newtheorem{definition}[thm]{Definition}
\theoremstyle{remark}
\newtheorem{remark}[thm]{Remark}
\newtheorem{ex}[thm]{Example}
\newcommand{\RR}{\mathbb R}
\begin{document}

\title{Remarks on scalable frames}
\author[Casazza, De Carli, Tran]{Peter G. Casazza, Laura De Carli, and Tin T. Tran}
\address{Peter Casazza: Department of Mathematics, University
of Missouri, Columbia, MO 65211-4100}
\address{Laura De Carli and Tin Tran: Department of Mathematics,  Florida International University
	Miami, FL 33199}

\thanks{The first author was supported by
 NSF DMS 1976025}

\email{Casazzap@missouri.edu}

\begin{abstract}
This paper investigates scalable frame in $\RR^n$. We define the reduced diagram matrix of a frame and use it to classify scalability of the frame under some conditions. 
We give a new approach to the scaling problem by breaking the problem into two smaller ones, each of which is easily solved, giving a simple way to check scaling. 
Finally, we study the scalability of dual frames.
\end{abstract}

\maketitle
\section{Introduction} A Parseval frame $\mathcal{X}=\{x_i\}_{i=1}^m$ for $\RR^n$ has a property that every vector $x\in \RR^n$ can be recovered via the painless reconstruction formula: $ x= \sum_{i=1}^{m}\langle x, x_i\rangle x_i $. This  is one of the properties that Parseval frames share with  orthonormal bases    that makes  them very desirable in the applications.
When a frame is not Parseval, the reconstruction formula depends on  the  inverse of the  frame operator, which may be difficult or
impossible  to calculate.
Thus, a key question  in frame theory  is the following: given a frame $\mathcal{X}=\{x_i\}_{i=1}^m\subset \RR^n$, can the frame vectors be modified so that the resulting system forms a Parseval frame?
 Since a frame is typically designed to accommodate certain requirements of an application, this modification process should be 
done in such a way as to not change the basic properties of the system.
One  way to do so  is just by scaling each  frame vector in such a way to obtain a Parseval frame.  Frame scaling   is a noninvasive procedure, and   properties such as erasure resilience or sparse expansions are left untouched by this modification. Unfortunately frame scaling 
is also one of the most difficult problems in frame theory. Much work has been done on this problem  \cite{CC,CCH,CK,CKO,CKL,DK,KO,KOP}.  

In  this paper we  find  new necessary and sufficient conditions which ensure  the scalability of frames in $\RR^n$.  Specifically, in Section 2 we  define the reduced diagram matrix of a given frame and we use it to classify the scalability of the frame under some conditions.  
In Section 3, we break out the scaling problem into two smaller ones: normalized scaling and orthogonal scaling. This gives a simpler way for checking scalability. 
Finally, in  Section 4, we present some results about the scalability of dual frames.  

\section{preliminaries}
In this section, we recall some basic facts about finite frames. For more information on the subject, see the books \cite{CG, W} and references therein. 
\begin{definition}
	A sequence of vectors $\mathcal{X}=\{x_i\}_{i=1}^m$ in $\RR^n$ is a {\bf frame} for $\RR^n$ if there are constants $0<A\leq B<\infty$ such that 
	\begin{align*}\label{eq1.1}
	A\Vert x\Vert^2\leq \sum_{i=1}^m\vert\langle x, x_i\rangle\vert^2\leq B\Vert x\Vert^2,  \ \mbox{ for all } x\in \RR^n.
	\end{align*}
\end{definition}
The constants $A$ and $B$ are called the {\it lower and upper frame bounds}, respectively. $\mathcal{X}$ is said to be a {\bf tight frame}, or an {\bf $A$-tight frame}, if $A=B$;  if $A = B = 1$, $\mathcal{X}$ is called a {\bf Parseval frame}. If all the frame elements have the same norm, we say that $\mathcal{X}$ is an {\it equal-norm frame} and if the frame elements have norm one, we call it a {\bf unit-norm frame}.  The numbers $\{\langle x, x_i\rangle\}_{i=1}^m$ are the frame coefficients of the vector $x\in \RR^n$. 

Given a frame $\mathcal{X}=\{x_i\}_{i=1}^m$ for $\RR^n$, the corresponding {\bf synthesis operator}, also denoted by $\mathcal{X}$, is the $n\times m$ matrix whose the $i$-th column is $x_i$. The adjoint matrix $\mathcal{X}^*$ is called the {\bf analysis operator}, and the {\bf frame operator} of $\mathcal{X}$ is then $S:=\mathcal{X}\mathcal{X}^*$. It is known that $S$ is a 
positive, self-adjoint, invertible operator  and satisfies:
$ Sx=\sum_{i=1}^m\langle x, x_i\rangle x_i$ for all $x\in \RR^n$.  
We recover vectors by the formula:
$$ x=S^{-1}Sx=\sum_{i=1}^m\langle x ,x_i\rangle S^{-1}x_i=\sum_{i=1}^m\langle x,S^{-1/2}x_i\rangle S^{-1/2}x_i $$
from which  follows that $\{S^{-1/2}x_i\}_{i=1}^m$ is a Parseval frame for $\RR^n$.

If the frame is $A$-tight, then its frame operator is a multiple of the identity operator. In this case, we have the following useful reconstruction formula:
\[x=\frac{1}{A}\sum_{i=1}^{m}\langle x, x_i\rangle x_i, \ \mbox{ for all } x\in \RR^n.\]
An important characterization of tight frames is using frame potentials.
\begin{definition}
	Let $\mathcal{X}=\{x_i\}_{i=1}^m$ be a collection of vectors in $\RR^n$. The {\bf frame potential} for $\mathcal{X}$ is the quantity
	\[FP(\mathcal{X})=\sum_{i=1}^{m}\sum_{j=1}^{m}|\langle x_i, x_j\rangle|^2.\]
\end{definition}
\begin{theorem}[\cite{BF}]\label{potential}
	Let $m\geq n$. If $\mathcal{X}=\{x_i\}_{i=1}^m$ is any set of unit vectors in $\RR^n$, then
	\[FP(\mathcal{X})\geq \frac{m^2}{n}\] with equality if and only if $\mathcal{X}$ is a tight frame.
\end{theorem}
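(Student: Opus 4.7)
My plan is to pass from the frame potential to the frame operator $S = \mathcal{X}\mathcal{X}^*$ and exploit the spectral decomposition. First I would observe that the frame potential equals $\tr(S^2)$. Indeed, with the Gram matrix $G = \mathcal{X}^*\mathcal{X}$ having entries $G_{ij} = \langle x_i, x_j\rangle$, one has
\begin{equation*}
FP(\mathcal{X}) = \sum_{i,j} |G_{ij}|^2 = \tr(G^2) = \tr(\mathcal{X}^*\mathcal{X}\mathcal{X}^*\mathcal{X}) = \tr(\mathcal{X}\mathcal{X}^*\mathcal{X}\mathcal{X}^*) = \tr(S^2),
\end{equation*}
where the middle equality uses self-adjointness of $G$ and the next uses the cyclic property of trace. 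Let $\lambda_1,\dots,\lambda_n \geq 0$ denote the eigenvalues of $S$ (counted with multiplicity). Then $FP(\mathcal{X}) = \sum_{k=1}^n \lambda_k^2$.

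Next I would compute $\tr(S)$ from the unit-norm assumption: since $\tr(S) = \tr(\mathcal{X}\mathcal{X}^*) = \tr(\mathcal{X}^*\mathcal{X}) = \sum_{i=1}^m \|x_i\|^2 = m$, we obtain $\sum_{k=1}^n \lambda_k = m$. Applying Cauchy--Schwarz (equivalently, the QM-AM inequality) to the vector $(\lambda_1,\dots,\lambda_n)$ against $(1,\dots,1)$ yields
\begin{equation*}
m^2 = \left(\sum_{k=1}^n \lambda_k\right)^2 \leq n \sum_{k=1}^n \lambda_k^2 = n \cdot FP(\mathcal{X}),
\end{equation*}
which is exactly the desired inequality $FP(\mathcal{X}) \geq m^2/n$.

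Finally I would handle the equality case. Cauchy--Schwarz gives equality iff $\lambda_1 = \cdots = \lambda_n$; combined with $\sum_k \lambda_k = m$, this forces every eigenvalue to equal $m/n > 0$, so $S = (m/n)I$. Since $m/n > 0$, $S$ is invertible, so $\mathcal{X}$ actually spans $\RR^n$ and is therefore a tight frame with constant $m/n$. Conversely, if $\mathcal{X}$ is tight, then $S = (m/n)I$ and $\tr(S^2) = n(m/n)^2 = m^2/n$, giving equality. The only subtle point is the conversion from Gram matrix to frame operator, which is a standard trace-cyclicity manipulation; no real obstacle arises.
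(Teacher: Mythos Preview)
Your argument is correct and is in fact the standard spectral proof of the Benedetto--Fickus frame-potential bound. Note, however, that the paper does not supply its own proof of this theorem: it is quoted from \cite{BF} and used as a black box in the proof of the proposition that follows. So there is no in-paper argument to compare against; your write-up simply fills in the omitted reference with the expected eigenvalue/Cauchy--Schwarz computation.
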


The following characterization of tight frames is also well known \cite{CG}.
\begin{theorem}\label{classify tight} 
A frame $\mathcal{X}$ for $\RR^n$ is tight if and only if the row vectors of its synthesis matrix are pairwise orthogonal and have the same norm.
\end{theorem}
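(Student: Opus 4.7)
The plan is to translate the tight-frame condition into a matrix identity and then read off the claim from the structure of $\mathcal{X}\mathcal{X}^*$. First I note that $\mathcal{X}$ is $A$-tight for some $A>0$ if and only if its frame operator satisfies $S = \mathcal{X}\mathcal{X}^* = AI_n$. Indeed, $\sum_{i=1}^m|\langle x,x_i\rangle|^2 = \langle Sx,x\rangle$ for every $x\in\RR^n$, so the tight-frame equality $\sum_i|\langle x,x_i\rangle|^2 = A\|x\|^2$ is equivalent to $\langle Sx,x\rangle = A\langle x,x\rangle$ for all $x$, which (since $S$ is self-adjoint) forces $S = AI_n$.

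Next I introduce notation for the rows: let $r_1,\dots,r_n\in\RR^m$ denote the rows of the $n\times m$ synthesis matrix $\mathcal{X}$. By the definition of matrix multiplication, the $(i,j)$-entry of $\mathcal{X}\mathcal{X}^*$ is exactly the Euclidean inner product $\langle r_i,r_j\rangle$ in $\RR^m$. Hence the condition $S = AI_n$ decomposes into two pieces: the off-diagonal equations $\langle r_i,r_j\rangle = 0$ for $i\neq j$ (pairwise orthogonality of the rows) and the diagonal equations $\|r_i\|^2 = A$ for all $i$ (equal row norms).

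For the forward direction I then invoke tightness to obtain $S = AI_n$ and read off orthogonality and equal row norms $\sqrt{A}$. For the reverse direction I set $A := \|r_1\|^2 = \cdots = \|r_n\|^2$ and note that the same entry-wise computation gives $\mathcal{X}\mathcal{X}^* = AI_n$, hence $\mathcal{X}$ is $A$-tight.

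No substantial obstacle appears in the argument: the whole content is the entry-wise identification $(\mathcal{X}\mathcal{X}^*)_{ij} = \langle r_i,r_j\rangle$ together with the standard characterization of $A$-tightness as $S = AI_n$. The only small point to keep straight is the passage from $\langle Sx,x\rangle = A\|x\|^2$ to $S = AI_n$, which uses self-adjointness of $S$ (alternatively, polarization, or applying the identity to the standard basis and pairs thereof).
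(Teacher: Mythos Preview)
Your argument is correct. The paper does not actually supply a proof of this theorem: it is stated as a well-known characterization and attributed to \cite{CG}, so there is no in-paper proof to compare against. The approach you give---identifying $(\mathcal{X}\mathcal{X}^*)_{ij}=\langle r_i,r_j\rangle$ and using that $A$-tightness is equivalent to $S=AI_n$---is the standard one and is exactly what the citation is pointing to.
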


Let us recall the definition of scalable frames from \cite{KOP}.

\begin{definition}
A frame  ${\mathcal X}=\{x_i\}_{i=1}^m$ for $\RR^n$ is
{\bf scalable} if there exist non-negative constants $\{a_i\}_{i=1}^m$ for which  $\{a_i x_i\}_{i=1}^m$ is a tight frame.
If all the $a_i$'s are positive,  we say that the frame is {\bf strictly scalable}.
 \end{definition}

Recently, a weakening of frame scaling  called {\bf piecewise scaling} was introduced in \cite{CDT}.

 \begin{definition}\label{D1}
		  A frame $\mathcal{X}=\{x_i\}_{i=1}^m$ for $\RR^n$ is {\bf   piecewise
		  	scalable}  if  there exists an orthogonal  projection $P:\RR^n\to\RR^n$ and   constants $\{a_i, b_i\}_{i=1}^m$
			 so that $\{a_iPx_i+b_i(I -P)x_i\}_{i=1}^m$ is a Parseval frame for $\RR^n$.  
	\end{definition} 
  Here, $I=I_n$ denotes the identity operator in $\RR^n$. This approach has several advantages over regular scaling. First, very few frames are scalable while many more are piecewise scalable.
Also, one problem with scaling is that in most cases, in order to scale a frame one has to chose most of the constants equal to zero, making the scaled frame unusable in practice. Piecewise
scaling can overcome this difficulty, and in some cases no constant is required to    be  zero.

Throughout, for any natural number $m$, we use the notation $[m]$ to denote the set $[m]:=\{1, 2, \ldots, m\}$, and we write $x=(x(1), x(2), \ldots, x(n))$ to represent the coordinates of a vector $x\in \RR^n$.

\section{Reduced diagram matrices and scaling problem}

		Given a vector $x\in \RR^n$, the {\it diagram vector} $\tilde x$ of $x$ is defined in \cite{CKL} as follows.  
		$$ \tilde x:= \frac{1}{\sqrt{n-1}} \left(\begin{matrix} (x(1))^2 - (x(2))^2  \cr \vdots \cr (x(n-1))^2 - (x(n))^2  \cr \sqrt{2n}\, x(1)x(2)\cr \vdots \cr \sqrt{2n} x(n-1) x(n)\end{matrix}\right) \in \RR^{n(n-1)},
		$$
		where the difference of squares $(x(i))^2 - (x(j))^2$ and the product $x(i)x(j)$ occur exactly once for $1\leq i <j\leq n$.

		The normalization of the components of the diagram vector is chosen to preserve unit vectors.  The following result appeared in \cite{CKL}. 

\begin{proposition}\label{Prop1} For any $x, y \in \RR^n$, we have that 
$$(n-1)\langle \tilde x, \tilde y\rangle = n \langle  x, y\rangle^2-\|x\|^2 \|y\|^2.$$
\end{proposition}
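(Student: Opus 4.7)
The plan is a direct computation: expand $(n-1)\langle \tilde x,\tilde y\rangle$ using the coordinate description of the diagram vector, split the sum into the contribution from the ``difference of squares'' block and from the ``product'' block, simplify each using standard identities involving $\|x\|^2\|y\|^2$ and $\langle x,y\rangle^2$, and check that everything telescopes to the claimed right-hand side.

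More concretely, from the definition of $\tilde x$ I would first write
\[
(n-1)\langle \tilde x,\tilde y\rangle \;=\; \sum_{i<j}\bigl(x(i)^2-x(j)^2\bigr)\bigl(y(i)^2-y(j)^2\bigr) \;+\; 2n\sum_{i<j} x(i)x(j)y(i)y(j),
\]
calling the two sums $S_1$ and $S_2$. For $S_1$, I would expand the product and regroup the four terms as ``diagonal'' ($x(i)^2y(i)^2$ and $x(j)^2y(j)^2$) versus ``cross'' ($x(i)^2y(j)^2$ and $x(j)^2y(i)^2$). Since each index $k\in[n]$ appears in exactly $n-1$ unordered pairs, the diagonal part contributes $(n-1)\sum_k x(k)^2 y(k)^2$, while the cross part, summed over $i\neq j$, equals $\|x\|^2\|y\|^2-\sum_k x(k)^2 y(k)^2$. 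Subtracting the cross part yields
\[
S_1 \;=\; n\sum_k x(k)^2 y(k)^2 \;-\; \|x\|^2\|y\|^2.
\]

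For $S_2$, the trick is to symmetrize: $2\sum_{i<j} x(i)x(j)y(i)y(j)=\sum_{i\neq j}x(i)x(j)y(i)y(j)=\langle x,y\rangle^2-\sum_k x(k)^2 y(k)^2$, so
\[
S_2 \;=\; n\langle x,y\rangle^2 \;-\; n\sum_k x(k)^2 y(k)^2.
\]
Adding $S_1$ and $S_2$ makes the $n\sum_k x(k)^2 y(k)^2$ terms cancel, leaving exactly $n\langle x,y\rangle^2-\|x\|^2\|y\|^2$.

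The only step requiring any care is the combinatorial bookkeeping: correctly counting how often each index appears in the unordered pairs $\{(i,j):i<j\}$ and correctly reassembling the off-diagonal sums as a complete square minus the diagonal. Once that is done cleanly, no nontrivial identity is used, so I don't anticipate a real obstacle beyond avoiding miscounts in the expansion of $S_1$.
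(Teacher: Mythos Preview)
Your computation is correct: the split into $S_1$ and $S_2$, the counting argument giving $S_1=n\sum_k x(k)^2y(k)^2-\|x\|^2\|y\|^2$, and the square-expansion giving $S_2=n\langle x,y\rangle^2-n\sum_k x(k)^2y(k)^2$ are all accurate, and the cancellation yields the claimed identity. Note that the paper itself does not supply a proof of this proposition; it is quoted from \cite{CKL}, so there is no argument in the paper to compare yours against, but your direct verification is exactly the kind of elementary bookkeeping one expects for this identity.
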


		We now give an another property of the diagram vectors.
\begin{proposition}
				Let $\{x_i\}_{i=1}^m$ be any set of unit vectors in $\mathbb{R}^n$ $(m\geq n)$. Let $\tilde{x_i}$ be the diagram vector of $x_i, i\in [m]$. Then 
				\[\sum_{i, j=1}^m\langle \tilde{x_i}, \tilde{x_j}\rangle\geq 0,\] with equality if and only if $\{x_i\}_{i=1}^m$ is a tight frame.
			\end{proposition}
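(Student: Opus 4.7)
The plan is to reduce this to the frame potential bound (Theorem \ref{potential}) by way of Proposition \ref{Prop1}.

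First, I would apply Proposition \ref{Prop1} to each pair $(x_i,x_j)$. Since every $x_i$ is a unit vector, $\|x_i\|^2\|x_j\|^2 = 1$, so
\[
(n-1)\langle \tilde{x_i}, \tilde{x_j}\rangle \;=\; n\langle x_i,x_j\rangle^2 - 1.
\]
Summing this identity over all $i,j \in [m]$ yields
\[
(n-1)\sum_{i,j=1}^m \langle \tilde{x_i},\tilde{x_j}\rangle \;=\; n\sum_{i,j=1}^m \langle x_i,x_j\rangle^2 - m^2 \;=\; n\,FP(\mathcal{X}) - m^2.
\]

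Now I would invoke Theorem \ref{potential}, which applies precisely because $\{x_i\}_{i=1}^m$ is a set of unit vectors with $m \geq n$: it gives $FP(\mathcal{X}) \geq m^2/n$, so the right-hand side above is nonnegative. Dividing by $n-1 > 0$ (the case $n=1$ is trivial, as all unit vectors are $\pm 1$ and the diagram vectors are zero-dimensional) produces the claimed inequality.

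The equality case is immediate from the same chain: $\sum_{i,j}\langle \tilde{x_i},\tilde{x_j}\rangle = 0$ if and only if $FP(\mathcal{X}) = m^2/n$, which by Theorem \ref{potential} happens exactly when $\{x_i\}_{i=1}^m$ is a tight frame. There is no real obstacle here — the proposition is essentially a repackaging of the Benedetto–Fickus frame potential inequality through the diagram-vector identity, so the only thing to be careful about is the trivial dimension $n=1$ and confirming that the hypothesis $m \geq n$ is used exactly where Theorem \ref{potential} is invoked.
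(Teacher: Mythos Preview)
Your proof is correct and follows essentially the same route as the paper: apply Proposition~\ref{Prop1} to get $(n-1)\langle \tilde{x_i},\tilde{x_j}\rangle = n\langle x_i,x_j\rangle^2 - 1$, sum over $i,j$, and invoke Theorem~\ref{potential}. Your write-up is in fact slightly more thorough, since you explicitly address the equality case and the degenerate dimension $n=1$.
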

			\begin{proof}
				By Proposition \ref{Prop1}, for any $i, j \in [m]$, we have 
				\[(n-1)\langle \tilde{x_i}, \tilde{x_j}\rangle=n|\langle x_i, x_j\rangle|^2-1.\]
			By Theorem \ref{potential}, we have that
				\[
				(n-1)\sum_{i, j=1}^{m}\langle \tilde{x_i}, \tilde{x_j}\rangle=n\sum_{i, j=1}^{m}|\langle x_i, x_j\rangle|^2-m^2
				\geq n\frac{m^2}{n}-m^2
				=0.
				\]
The claim follows.
			\end{proof}
\begin{definition} Given a set   $\mathcal{X}=\{x_i\}_{i=1}^m$ in $\RR^n$, the {\it diagram matrix} $\theta_{\mathcal{X}}$ is the $n(n-1)\times m$ matrix 
whose  the $i$-th column is the diagram vector of  $x_i$.   
\end{definition}

		In \cite{CKL}, the authors use the diagram vectors to classify scalable frame. In the following, we will show that this can be done by using  reduced diagram vectors. Before giving the definition, we prove the following lemma.

		\begin{lemma}\label{L-rank} The rank of the diagram matrix $\theta_{\mathcal{X}}$ of a set of vectors $\mathcal{X}=\{x_i\}_{i=1}^m\subset \RR^n$ is $\leq  \frac{(n-1)(n+2)}{2}$.
		\end{lemma}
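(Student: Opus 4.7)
The plan is to partition the $n(n-1)$ rows of $\theta_{\mathcal{X}}$ into two blocks according to the two types of entries appearing in a diagram vector and bound the rank of each block separately. Since the row space of a matrix is spanned by the union of the rows of any row-partition, we have $\rank(\theta_{\mathcal{X}}) \le \rank(R_1) + \rank(R_2)$, where $R_1$ collects the rows corresponding to the difference-of-squares entries and $R_2$ the rows corresponding to the product entries.

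The block $R_2$ consists of the $n(n-1)/2$ rows whose entry in column $k$ is $\frac{\sqrt{2n}}{\sqrt{n-1}}\, x_k(i)x_k(j)$ for $1\le i<j\le n$, so trivially $\rank(R_2) \le n(n-1)/2$. For the block $R_1$, I would introduce for each $\ell \in [n]$ the vector $v_\ell \in \RR^m$ whose $k$-th coordinate is $x_k(\ell)^2$. Each row of $R_1$ then has the form $\frac{1}{\sqrt{n-1}}(v_i-v_j)$ with $1 \le i < j \le n$, so the row space of $R_1$ lies inside $\spn\{v_i - v_j : 1\le i < j\le n\}$. Since every such difference is a linear combination of the $n-1$ vectors $v_1 - v_k$ ($k=2,\dots,n$), this spanning set has at most $n-1$ elements, giving $\rank(R_1) \le n-1$.

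Combining the two bounds yields
\[
\rank(\theta_{\mathcal{X}}) \le (n-1) + \frac{n(n-1)}{2} = \frac{(n-1)(n+2)}{2},
\]
which is the claim. There is no substantive obstacle: the entire argument rests on the elementary observation that the $n(n-1)/2$ pairwise differences of $n$ scalars span an at most $(n-1)$-dimensional subspace, which compresses the contribution of the difference-of-squares rows from the naive bound $n(n-1)/2$ down to $n-1$.
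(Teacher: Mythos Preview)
Your proof is correct and follows essentially the same approach as the paper: both arguments observe that the $\binom{n}{2}$ difference-of-squares rows are pairwise differences of the $n$ vectors $v_\ell = (x_1(\ell)^2,\ldots,x_m(\ell)^2)$ and hence span a subspace of dimension at most $n-1$, while the remaining $\binom{n}{2}$ product rows contribute at most $\binom{n}{2}$ to the rank. The paper writes out the explicit identity $v_i - v_j = (v_1 - v_j) - (v_1 - v_i)$, whereas you invoke the general fact about pairwise differences, but the content is the same.
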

		
		\begin{proof}  The rows $n, n+1,\ldots, \frac{ n(n-1)}{2}$   of the matrix $\theta_{\mathcal{X}}$ can be obtained from   the  first $n-1$ rows. Specifically, if 
	 $1< i <j\leq n$, the rows whose elements are  
$$\left((x_1(i))^2-(x_1(j))^2, (x_2(i))^2-(x_2(j))^2, \ldots, (x_m(i))^2-(x_m(j))^2\right)$$	 is  the difference  of the  $(j-1)$-th and the $(i-1)$-th rows of    $\theta_{\mathcal{X}}$. Thus,  the rank of  $\theta_{\mathcal{X}}$ is $\leq \frac{n(n-1)}{2}+ (n-1)=\frac{(n-1)(n+2)}{2}$.
		\end{proof}

\begin{definition}  
The  {\it reduced diagram matrix} of a set of vectors $\mathcal{X}=\{x_i\}_{i=1}^m$ in $\RR^n$ is the matrix $\tilde\theta_\mathcal{X}$  obtained  by removing   the rows $n, n+1, \ldots, \frac{n(n-1)}{2}$ from the diagram matrix   $\theta_\mathcal{X}$.   
The {\it reduced diagram vector} of $x_i$, that we still denote with $\tilde x_i$, is the $i$-th column of the reduced diagram matrix $\tilde\theta_\mathcal{X}$.  
\end{definition}

By Lemma \ref{L-rank}, the rank of $\tilde\theta_\mathcal{X}$  is 
$\leq \min\left\{ \frac{(n-1)(n+2)}{2}, m\right\}$.

\medskip
We are now ready to present the following important classification of scalable frames. The main idea of this theorem is not new (see \cite{CKL, KOP}), but we will restate it here with our notation and we will provide several equivalent conditions which we will use later.

			\begin{theorem}\label{thm-classify}
				Let $\mathcal{X}=\{x_i\}_{i=1}^m$ be a frame for $\mathbb{R}^n$ and $\tilde{\theta}_{\mathcal{X}}$ be the reduced diagram matrix of $\mathcal{X}$. Then the following are equivalent.
				\begin{enumerate}
					\item $\mathcal{X}$ is scalable.
					\item There is a non-negative, non-zero vector $c = (c_1, c_2, \ldots, c_m)$ such that $c$ is orthogonal to every row of ${\tilde \theta}_{\mathcal{X}}$.
					\item 	There is a non-negative, non-zero vector $c = (c_1, c_2, \ldots, c_m)$ in the null space of ${\tilde \theta}_{\mathcal{X}}$.
					\item There is a non-negative, non-zero vector $c = (c_1,c_2, \ldots, c_m)$ such that 
					\[\sum_{i=1}^{m}c_i\tilde{x}_i=0\] i.e., 0 can be represented as a linear combination of the reduced diagram vectors $\tilde{x}_i$ with non-negative coefficients.
					\item There is a non-negative, non-zero vector $c = (c_1,c_2, \ldots, a_m)$ in the null space of the Gram matrix of ${\tilde \theta}_{\mathcal{X}}$.
				\end{enumerate}
			\end{theorem}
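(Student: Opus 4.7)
The plan is to dispatch the equivalences among (2)--(5) by pure linear algebra and then put all the work into (1)$\Leftrightarrow$(2). For the linear-algebra block, (2)$\Leftrightarrow$(3) is the tautology that the null space of a matrix is the orthogonal complement of its row space; (3)$\Leftrightarrow$(4) is the identity $\tilde\theta_{\mathcal{X}} c = \sum_{i=1}^m c_i \tilde x_i$, obtained by reading off how $\tilde\theta_{\mathcal{X}}$ acts on $c$ column by column; and (3)$\Leftrightarrow$(5) is the standard fact that $\ker(A)=\ker(A^*A)$ applied to $A=\tilde\theta_{\mathcal{X}}$, which moreover preserves non-negativity of the witness $c$.

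The content of the theorem lies in (1)$\Leftrightarrow$(2). I would substitute $c_i=a_i^2$ and unpack the identity $\sum_i c_i x_i x_i^* = \lambda I$ entry by entry: the off-diagonal entries give $\sum_i c_i x_i(k) x_i(l) = 0$ for all $k<l$, and equality of diagonal entries gives $\sum_i c_i[(x_i(k))^2-(x_i(l))^2]=0$ for all $k\neq l$. Comparing with the definition of the diagram vector, the off-diagonal conditions are, up to the uniform factor $\sqrt{2n}/\sqrt{n-1}$ (which is harmless for an orthogonality relation), the product rows of $\tilde\theta_{\mathcal{X}}$, while the diagonal-equality conditions reduce, via the row dependence established in Lemma \ref{L-rank}, to the $n-1$ consecutive-difference rows that are kept in $\tilde\theta_{\mathcal{X}}$. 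Consequently, $c$ is orthogonal to every row of $\tilde\theta_{\mathcal{X}}$ if and only if $\sum_i c_i x_i x_i^*$ is a scalar multiple of $I$.

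The last point to handle is positivity of the tight-frame constant. Assuming the frame vectors are non-zero, if $c$ is non-negative and non-zero then
\[
n\lambda \;=\; \tr\!\Big(\sum_i c_i x_i x_i^*\Big) \;=\; \sum_i c_i \|x_i\|^2 \;>\; 0,
\]
so $\lambda>0$ automatically and $\{\sqrt{c_i}\,x_i\}_{i=1}^m$ is genuinely tight in the sense of the definition; conversely, a tight scaling forces $c=(a_1^2,\ldots,a_m^2)$ to be non-negative and non-zero, for otherwise the frame operator would vanish. The only ``obstacle'' in the proof is the bookkeeping check that the rows removed from $\theta_{\mathcal{X}}$ in forming $\tilde\theta_{\mathcal{X}}$ add no new constraints on $c$; but this is precisely what Lemma \ref{L-rank} supplies, so nothing essentially new arises.
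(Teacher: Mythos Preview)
Your proof is correct and follows essentially the same route as the paper's. The only cosmetic difference is that for $(1)\Leftrightarrow(2)$ the paper invokes Theorem~\ref{classify tight} (rows of the synthesis matrix orthogonal and of equal norm) rather than writing $\sum_i c_i x_i x_i^*=\lambda I$ directly, but unpacking either statement entrywise yields the same system of conditions you describe; your explicit check that $\lambda>0$ via the trace is a nice touch that the paper leaves implicit.
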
 
\begin{proof}
It is clear that $(2) \Leftrightarrow (3) \Leftrightarrow (4)$.

$(1) \Leftrightarrow (2)$: By definition, $\mathcal{X}$ is scalable if there exist non-negative scalars $a=\{a_i\}_{i=1}^m$ so that $a\mathcal{X}:=\{a_ix_i\}_{i=1}^m$ is a tight frame. By Proposition \ref{classify tight}, this is equivalent to the fact that the rows of the synthesis matrix $a\mathcal{X}$ are orthogonal and have the same norm. Let $R_j$ denote the $j$-th row of the synthesis matrix $a\mathcal{X}$ and let $\tilde R_{j}$ denote the $j$-th row of the reduced diagram matrix of $a\mathcal{X}$.
			The rows $R_j$ and $R_1$ have the same norm if and only if \[\sum_{i=1}^m[a_ix_i(j)]^2= \sum_{i=1}^m[a_ix_i(1)]^2,  j=2, \ldots, n.\] But this is equivalent to $\langle c, \tilde{R}_j\rangle =0$ for all $j\in [n-1]$, where $c=(a_1^2,  \ldots, a_m^2)$. A similar argument shows that the rows of the synthesis matrix $a\mathcal{X}$ are orthogonal if and only if $\langle c, \tilde{R}_j\rangle =0$ for all $j\geq n$. 

$(3) \Leftrightarrow (5)$: Let $G_\mathcal{X}$ be the Gram matrix of ${\tilde \theta}_{\mathcal{X}}$. If there exists a non-negative, non-zero vector $c=(c_1, \ldots, c_m)$ so that ${\tilde \theta}_{\mathcal{X}}c=0$, then $G_\mathcal{X}x={{\tilde \theta}_{\mathcal{X}}}^*{\tilde \theta}_{\mathcal{X}}c=0$. This shows that (3) implies (5). Conversely, if ${{\tilde \theta}_{\mathcal{X}}}^*{\tilde \theta}_{\mathcal{X}}c=0$, then $\|\tilde\theta_{\mathcal{X}}c\|^2= \langle \tilde\theta_{\mathcal{X}}c, \tilde\theta_{\mathcal{X}}c\rangle=\langle\tilde\theta_{\mathcal{X}}^*\tilde\theta_{\mathcal{X}}c, c\rangle=0$. So $\tilde\theta_{\mathcal{X}}c=0$.
\end{proof}

\begin{remark} The statement ``$(1) \Leftrightarrow (5)$'' was stated in \cite{CKL} for the diagram matrix of $\mathcal{X}$. Also, the statement ``$(1) \Leftrightarrow (3)$'' appeared in \cite{KOP} when they classified $k$-scalable and strictly $k$-scalable frames. Recall  that a frame $\{x_i\}_{i=1}^m$ for $\RR^n$ is said to be $k$-scalable, respectively, strictly $k$-scalable if there is a set $I\subset [m], |I|=k$ such that $\{x_i\}_{i\in I}$ is a scalable frame, respectively, a strictly scalable frame for $\RR^n$.

\end{remark}

The following is a direct consequence of Theorem \ref{thm-classify}.
\begin{corollary}\label{C-sign}
A frame $\mathcal{X}$ is non-scalable if one of the following conditions holds.
\begin{enumerate}
\item $\tilde \theta_{\mathcal{X}}$ contains at least one row whose elements are all non-negative or all non-positive, but not all zero. 
\item The columns of $\tilde \theta_{\mathcal{X}}$ are linearly independent.
\end{enumerate}
\end{corollary}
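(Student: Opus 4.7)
Both implications will follow by the contrapositive of Theorem \ref{thm-classify}. The plan is: assume $\mathcal{X}$ is scalable, use the characterization to produce a non-negative nonzero vector $c$ in the null space of $\tilde\theta_{\mathcal{X}}$, and then show that each hypothesis of the corollary forces $c = 0$, which is the desired contradiction.

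For part (1), I would fix a row $R = (R_1, \ldots, R_m)$ of $\tilde\theta_{\mathcal{X}}$ having all non-negative entries, not all zero (the all non-positive case is symmetric, since Theorem \ref{thm-classify}(2) only cares about orthogonality, so one may replace $R$ with $-R$). Since $c$ lies in the null space, it is in particular orthogonal to $R$, giving
\begin{equation*}
\sum_{i=1}^m c_i R_i = 0.
\end{equation*}
Each summand $c_i R_i$ is non-negative by our sign assumptions on $c$ and $R$, so every summand must vanish individually, forcing $c_i = 0$ at each index $i$ with $R_i > 0$. The main subtlety lies here: the sign argument only controls indices in the support of $R$, so the hypothesis must be read strongly enough (all entries strictly positive, or combined with the action of the remaining rows of $\tilde\theta_{\mathcal{X}}$) to push every $c_i$ to zero and contradict $c \ne 0$.

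For part (2), the argument reduces to a one-line observation. By the equivalence $(1) \Leftrightarrow (4)$ in Theorem \ref{thm-classify}, scalability of $\mathcal{X}$ is equivalent to the existence of a non-negative nonzero $c$ with $\sum_{i=1}^m c_i \tilde x_i = 0$. If the columns $\tilde x_1, \ldots, \tilde x_m$ of $\tilde\theta_{\mathcal{X}}$ are linearly independent, then the only solution to this equation is $c = 0$, immediately ruling out any valid scaling. No further frame-theoretic machinery is needed beyond Theorem \ref{thm-classify}; the only step requiring any care is the sign bookkeeping in part (1).
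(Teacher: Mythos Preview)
Your approach matches the paper's, which simply labels the corollary a ``direct consequence'' of Theorem~\ref{thm-classify} and gives no further argument. Part~(2) is entirely correct: linear independence of the columns forces the null space of $\tilde\theta_{\mathcal X}$ to be $\{0\}$, so no admissible $c$ exists.

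Your hesitation in part~(1) is not only warranted but necessary: as literally stated (allowing zero entries in the row), the claim fails. Take $\mathcal X=\{(1,0),(1,1),(1,-1)\}$ in $\RR^2$; the reduced diagram matrix is
\[
\tilde\theta_{\mathcal X}=\begin{pmatrix}1&0&0\\0&2&-2\end{pmatrix},
\]
whose first row is non-negative and not identically zero, yet $c=(0,1,1)$ lies in the null space and the frame is scalable (indeed $\{(1,1),(1,-1)\}$ is already tight). Your sign argument kills only those coordinates $c_i$ with $R_i\neq 0$, and the ``action of the remaining rows'' does not in general finish the job, as this example shows. The statement becomes correct---and your proof goes through verbatim---once ``non-negative/non-positive'' is read as ``strictly positive/strictly negative'' (equivalently, the distinguished row has no zero entry). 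The paper's own illustrative remark immediately after the corollary, about two rows of the synthesis matrix having all positive or all negative entries, produces exactly such a row of $\tilde\theta_{\mathcal X}$ with strictly nonzero entries, so the intended application is covered by the stronger hypothesis. In short, the gap you sensed is in the statement, not in your method.
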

Condition (1) of the corollary is verified, for instance, when the synthesis matix $\mathcal{X}$ contains two rows with the entries which are all positive or all negative.

The following result also appeared in \cite{KOP}, with a differrent notation. For the completeness of the paper, we will restate it here without its proof.


		 Recall that the {\it convex hull} of a set $\mathcal{X}=\{x_i\}_{i=1}^m$ in $\mathbb{R}^n$ is the set
		\[\co(\mathcal{X})=\left\{\sum_{i=1}^m\alpha_ix_i: \alpha_i\geq 0, \sum_{i=1}^{m}\alpha_i=1\right\}.\]
		
		\begin{theorem}[see also in \cite{KOP}] Let $\mathcal{X}=\{x_i\}_{i=1}^m$ be a frame for $\mathbb{R}^n$ and $\tilde{\theta}_{\mathcal{X}}$ be the reduced diagram matrix of $\mathcal{X}$. The following are equivalent.
			\begin{enumerate}
				\item $\mathcal{X}$ is scalable.
				\item $0\in \co(\tilde{\theta}_{\mathcal{X}})$ (the convex hull of the column vectors of $\tilde{\theta}_{\mathcal{X}}$).
                                     \item There is no $y\in \mathbb{R}^{\frac{(n-1)(n+2)}{2}}$ such that $\langle \tilde{x}_i, y\rangle>0$ for all $ i\in [m]$.
			\end{enumerate}
		\end{theorem}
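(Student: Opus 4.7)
The plan is to exploit Theorem \ref{thm-classify} for $(1) \Leftrightarrow (2)$ and a standard separating hyperplane argument for $(2) \Leftrightarrow (3)$.

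For $(1) \Leftrightarrow (2)$, I would quote Theorem \ref{thm-classify}$(4)$: $\mathcal{X}$ is scalable iff there exist non-negative scalars $c_1, \ldots, c_m$, not all zero, with $\sum_{i=1}^m c_i \tilde{x}_i = 0$. If such coefficients exist, set $s = \sum_i c_i > 0$ (since the $c_i$ are non-negative and not all zero) and define $\alpha_i = c_i/s$; then $\sum_i \alpha_i = 1$, $\alpha_i \ge 0$, and $\sum_i \alpha_i \tilde{x}_i = 0$, so $0 \in \co(\tilde\theta_{\mathcal{X}})$. Conversely, any convex representation of $0$ furnishes non-negative, non-zero coefficients annihilating the $\tilde{x}_i$'s, so Theorem \ref{thm-classify}$(4)$ applies.

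For $(2) \Leftrightarrow (3)$, the key observation is that $\co(\tilde\theta_{\mathcal{X}})$ is the convex hull of finitely many points in $\RR^{(n-1)(n+2)/2}$, hence compact (and in particular closed and convex). For the direction $\neg(2) \Rightarrow \neg(3)$ I would apply the separating hyperplane theorem: since $0 \notin \co(\tilde\theta_{\mathcal{X}})$, there exist $y \in \RR^{(n-1)(n+2)/2}$ and $\beta \in \RR$ with $\langle v, y\rangle \ge \beta > 0 = \langle 0, y\rangle$ for every $v \in \co(\tilde\theta_{\mathcal{X}})$; in particular $\langle \tilde{x}_i, y\rangle \ge \beta > 0$ for all $i \in [m]$. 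For the converse $\neg(3) \Rightarrow \neg(2)$, given such $y$, any $v = \sum_i \alpha_i \tilde{x}_i$ with $\alpha_i \ge 0$, $\sum_i \alpha_i = 1$ satisfies
\[
\langle v, y\rangle = \sum_{i=1}^m \alpha_i \langle \tilde{x}_i, y\rangle > 0,
\]
so $v \neq 0$ and hence $0 \notin \co(\tilde\theta_{\mathcal{X}})$.

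The only real subtlety is making sure the separation between $0$ and the compact convex set $\co(\tilde\theta_{\mathcal{X}})$ is strict, but that is automatic because a compact convex set disjoint from a point can be strictly separated from it by a hyperplane; this is the step one has to be careful about, as otherwise one only gets $\langle \tilde{x}_i, y\rangle \ge 0$ rather than the strict inequality required by $(3)$.
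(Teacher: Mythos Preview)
Your argument is correct. Note, however, that the paper explicitly states this result ``without its proof,'' deferring to \cite{KOP}; there is therefore no proof in the paper to compare against. Your approach---normalizing the coefficients from Theorem~\ref{thm-classify}(4) for $(1)\Leftrightarrow(2)$, and invoking strict separation of a point from a compact convex polytope for $(2)\Leftrightarrow(3)$---is the standard route and matches what is done in \cite{KOP}.
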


Before we state our next theorem, we recall some standard linear algebra results. 
	
If $A=\{a_{i,j}\}_{ i,j\in [m]}$  is a square matrix, the determinant of the $(m-1)\times (m-1)$ sub-matrix obtained from $A$ by deleting the $i$-th row and $j$-th column is called the {\it minor of $a_{i,j}$}. When there is no ambiguity, this number is often denoted by $M_{i,j}$. The {\it cofactor of $a_{i,j}$} is obtained by multiplying  $M_{i,j}$ by $(-1)^{i+j}$ and can be denoted by $A_{i,j}$. 
	
Note that $\det(A)=\sum_{j=1}^m a_{i,j}A_{i,j}$,  for every $i\in [m]$. 
The following result is well known, but we prove it here for completeness.
\begin{lemma}\label{L-orth} For all $i, k\in [m]$, with  $ i \ne k$, we have that
 $\sum_{j=1}^m a_{k,j}A_{i,j}=0$. In other words, for every $i\in [m]$, the vector  $(A_{i,1}, \ldots, A_{i, m})$ is orthogonal to  all  rows of $A$ with the exception of the $i$-th row.
\end{lemma}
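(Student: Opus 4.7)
The plan is to use the standard trick of building an auxiliary matrix that has two equal rows, so its determinant vanishes, and then read off the desired identity via cofactor expansion.

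Concretely, I would fix $i \neq k$ and define the matrix $B = \{b_{r,j}\}_{r,j \in [m]}$ obtained from $A$ by replacing the $i$-th row of $A$ with a copy of its $k$-th row, while keeping all other rows of $A$ unchanged. Then $B$ has two identical rows (the $i$-th and $k$-th rows are both equal to $(a_{k,1}, \ldots, a_{k,m})$), and therefore $\det(B) = 0$.

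Next I would expand $\det(B)$ along the $i$-th row. The key observation is that the minor of $b_{i,j}$ in $B$ is obtained by deleting the $i$-th row and $j$-th column of $B$; but since the only row where $B$ and $A$ differ is the $i$-th row itself, which has just been deleted, this minor is identical to the minor $M_{i,j}$ of $a_{i,j}$ in $A$. Hence the cofactor of $b_{i,j}$ in $B$ equals $A_{i,j}$, and the cofactor expansion gives
\[
0 = \det(B) = \sum_{j=1}^m b_{i,j}\, A_{i,j} = \sum_{j=1}^m a_{k,j}\, A_{i,j},
\]
which is exactly the claim. The second sentence of the lemma (that $(A_{i,1}, \ldots, A_{i,m})$ is orthogonal to every row of $A$ except the $i$-th) is then just a reformulation, combined with the standard expansion $\det(A) = \sum_j a_{i,j} A_{i,j}$ for the $i$-th row itself.

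There is no real obstacle here; the only point that deserves care is justifying that the cofactors of the $i$-th row of $B$ coincide with those of the $i$-th row of $A$, which follows immediately from the fact that cofactor computation for the $i$-th row ignores the $i$-th row entirely.
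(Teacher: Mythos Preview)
Your proof is correct and is essentially the same classical argument as the paper's: both exploit that the cofactors $A_{i,j}$ do not depend on the entries of row $i$, combined with a row operation and cofactor expansion along row $i$. The only cosmetic difference is that the paper \emph{adds} row $k$ to row $i$ (leaving $\det A$ unchanged) and then subtracts the original expansion, whereas you \emph{replace} row $i$ by row $k$ (forcing the determinant to vanish because of two equal rows); both yield $\sum_j a_{k,j}A_{i,j}=0$ immediately.
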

\begin{proof} To prove this, observe that the determinant of $A$ does not change if we replace  the   row $R_i=(a_{i,1}, \ldots, a_{i,m})$ 
 with $R_i+R_k=(a_{i,1}+  a_{k, 1}, \ldots, a_{i,m}+  a_{k, m})$, with  $k\ne i$. Thus,
$$
\det(A)=\sum_{j=1}^m a_{i,j} A_{i,j}=\sum_{j=1}^m(a_{i,j}+a_{k,j}) A_{i,j}
$$ 
and hence, $\sum_{j=1}^m a_{k,j} A_{i,j}=0$.
\end{proof}

 \begin{theorem}\label{T-main1}  Let $\mathcal{X}=\{x_i\}_{i=1}^m$ be a frame for $\RR^n$, and let $\tilde{\theta}_{\mathcal{X}}$ be the reduced diagram matrix of $\mathcal{X}$.
Assume that ${\tilde\theta}_{\mathcal{X}}$ has rank $m-1$ and let $R_1, R_2, \ldots, R_{m-1}$ be the $m-1$ linearly independent rows of $\tilde{\theta}_{\mathcal{X}}$. Let $E= (\alpha_1, \alpha_2, \ldots, \alpha_m)$ be a vector of coefficients, and let  $A$ be the matrix whose rows are $E, R_1, \ldots, R_{m-1}$. Then $\mathcal{X}$ is scalable if and only if the cofactors of the $\alpha_j$'s in the expansion of  $\det(A)$ are all non-negative or non-positive.  
\end{theorem}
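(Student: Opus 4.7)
The plan is to apply Theorem \ref{thm-classify}(3): $\mathcal{X}$ is scalable if and only if the null space of $\tilde{\theta}_{\mathcal{X}}$ contains a non-zero vector with non-negative entries. Because $\tilde{\theta}_{\mathcal{X}}$ has $m$ columns and rank $m-1$, its null space is one-dimensional, so the problem reduces to exhibiting a generator of this null space and checking whether that generator (or its negative) has all entries of one sign.

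The natural candidate is the vector $v = (A_{1,1}, A_{1,2}, \ldots, A_{1,m})$ of cofactors of the $\alpha_j$'s in the expansion of $\det(A)$ along the first row. By Lemma \ref{L-orth} applied to the $m \times m$ matrix $A$, the vector $v$ is orthogonal to every row of $A$ except the first one, i.e.\ orthogonal to $R_1, \ldots, R_{m-1}$. Since these rows span the row space of $\tilde{\theta}_{\mathcal{X}}$, this is exactly the condition that $v$ belongs to the null space of $\tilde{\theta}_{\mathcal{X}}$.

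Next I would verify that $v \neq 0$. Because $R_1, \ldots, R_{m-1}$ are linearly independent, the $(m-1)\times m$ matrix they form has rank $m-1$, so at least one of its maximal $(m-1)\times(m-1)$ minors is non-zero. Each such minor is, up to sign, one of the cofactors $A_{1,j}$, so at least one $A_{1,j}$ is non-zero. Thus $v$ is a non-zero element of the one-dimensional null space of $\tilde{\theta}_{\mathcal{X}}$, hence a generator of it.

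The equivalence now follows: every vector in the null space of $\tilde{\theta}_{\mathcal{X}}$ is a scalar multiple of $v$, so such a non-zero non-negative vector exists if and only if $v$ or $-v$ is entrywise non-negative, i.e.\ the cofactors $A_{1,1}, \ldots, A_{1,m}$ are either all non-negative or all non-positive. The only mildly delicate point is the bookkeeping — matching the indexing of $A$ (whose first row is the formal vector $E$) with the cofactor subscripts, and making sure the sign condition correctly permits zero entries — but the structural content is a direct consequence of Theorem \ref{thm-classify} and Lemma \ref{L-orth} once one recognizes that the cofactor vector is the explicit generator of the one-dimensional null space.
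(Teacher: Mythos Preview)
Your proof is correct and follows essentially the same approach as the paper: use Lemma \ref{L-orth} to show that the cofactor vector lies in the null space of $\tilde{\theta}_{\mathcal{X}}$, observe that the rank hypothesis forces this null space to be one-dimensional (equivalently, the rows span a hyperplane) so the cofactor vector generates it, and then invoke Theorem \ref{thm-classify}. Your justification of $v\neq 0$ via a non-vanishing maximal minor is a bit more explicit than the paper's, but the argument is the same in substance.
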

\begin{proof}
Let $c=(c_1, \ldots, c_m)$ be the vector of the cofactors of the $(\alpha_1,\ldots, \alpha_m)$. Then $c\not=0$ since $\rank\tilde\theta_{\mathcal{X}}=m-1$. By Lemma \ref{L-orth}, $c$ is orthogonal to all the rows of $\tilde\theta_{\mathcal{X}}$. If $c_i's$ are all non-negative or non-positive, then $\mathcal{X}$ is scalable by Theorem \ref{thm-classify}. Conversely, if $\mathcal{X}$ is scalable, then again by Theorem \ref{thm-classify}, there is a non-negative, non-zero vector $a=(a_1, \ldots, a_m)$ such that $a$ is orthogonal to all the rows of $\tilde\theta_{\mathcal{X}}$. Note that these rows span a hyperplane in $\RR^m$. Therefore, $c$ must be a multiple of $a$. This completes the proof.
\end{proof}

\begin{remark}  We can apply Gaussian eliminations to the reduced matrix $\tilde\theta_{\mathcal{X}}$ and use the $(m-1)$ linearly independent rows of the result matrix.

\end{remark}

\begin{ex} Consider a  unit-norm frame $\mathcal{X}=\{x_i\}_{i=1}^3$ of three vectors in $\RR^2$.   After rotating and reflecting the vectors about the origin, and re-indexing them if necessary, we can always assume that $x_1=(1,0)$, $x_2= (\cos \theta, \, \sin\theta)$ and $x_3= (\cos \psi, \, \sin\psi)$ with $0\leq \theta\leq \psi\leq \pi$. The diagram matrix of  $\mathcal{X}$ is the same as the reduced diagram matrix and is 
 \begin{align*}
 \tilde{\theta}_{\mathcal{X}}&= \left(\begin{matrix}  1& (x_2(1))^2-(x_2(2))^2 & (x_3(1))^2-(x_3(2))^2    
 	\cr 0 & 2\, x_2(1)x_2(2)&  2\,x_3(1)x_3(2)   \cr
 \end{matrix}\right)\\
&=
 \left(\begin{matrix}  1& \cos(2\theta) & \cos(2\psi) \cr 0 &    \sin(2\theta) &  \sin(2\psi) \end{matrix}\right).
 \end{align*}
 $\tilde{\theta}_{\mathcal{X}}$ has maximum rank if and only if at least one of the angles $\theta, \psi$, and $\psi-\theta$ is different from $0$, $\pi/2$, and $\pi$.  The vector of the cofactors of the elements $\alpha_j$'s  in the  matrix $$
 \left(\begin{matrix} \alpha_1& \alpha_2 & \alpha_3\cr 1& \cos(2\theta) & \cos(2\psi)  \cr 0 &   \sin(2\theta) & \sin(2\psi) \end{matrix}\right)
 $$ is  
 $c=(\sin(2(\psi-\theta)), -\sin(2\psi), \sin(2\theta)).$ 
 We can check that the components of $c$ cannot be all non-positive. Moreover, they are all non-negative if and only if $0\leq \theta\leq\pi/2$ and $\pi/2\leq\psi\leq\theta+\pi/2$. Thus, the given frame is scalable if and only if the angles $\theta$ and $\psi$ satisfy these conditions. The scaling that makes the frame tight is $(\sqrt{\sin(2(\psi-\theta))}, \sqrt{-\sin(2\psi)}, \sqrt{\sin(2\theta)})$.

\end{ex}
\begin{remark} The conditions $0\leq \theta\leq\pi/2$ and $\pi/2\leq\psi\leq\theta+\pi/2$ are compatible with the fact that a frame is scalable if and only if the vectors do not lie in the same open quadrant, see \cite{KO}.
\end{remark}

In the following, we will consider the case when the orthogonal complement of the row space of the reduced diagram matrix has dimension two.

\begin{theorem} \label{T-codim2}
 	Let  $\mathcal{X}=\{x_i\}_{i=1}^m$ be a frame for $\RR^n$.
 	Assume that the reduced diagram matrix  ${\tilde \theta}_{\mathcal{X}}$ has rank $(m-2)$. Let $R_1, \ldots, R_{m-2}$ be the $(m-2)$ linearly independent rows of the matrix  ${\tilde\theta}_{\mathcal{X}}$. Let   $w_1, w_2 \in\RR^m$ be such that  the set $\{w_1, w_2, R_1,\ldots, R_{m-2}\}$ is linearly independent.  Let $E=(\alpha_1, \ldots,  \alpha_m)$  be a vector of coefficients. Then $\mathcal{X}$ is scalable if and only if there exists $t\in [0, 2\pi)$ for which the cofactors of  the $\alpha_j$ in the expansion of the determinant of the matrix whose rows are $(E, \cos t w_1+\sin t w_2, R_1, \ldots, R_{m-2})$ are all non-positive or non-negative. 
 \end{theorem}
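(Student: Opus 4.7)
The plan is to apply Theorem \ref{thm-classify}: $\mathcal{X}$ is scalable if and only if the null space $N$ of $\tilde\theta_\mathcal{X}$ contains a non-negative, non-zero vector. Since $\rank \tilde\theta_\mathcal{X}=m-2$ and the rows $R_1,\ldots,R_{m-2}$ span the row space $V$, we have $N=V^\perp$, a two-dimensional subspace of $\RR^m$. The strategy is to use the cofactor vectors $c_t$ as a continuous parametrization of the lines of $N$, and then to identify those $t$ for which $c_t$ has all entries of one sign.

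For each $t\in[0,2\pi)$, write $v_t:=\cos t\,w_1+\sin t\,w_2$ and let $A_t$ denote the $m\times m$ matrix whose rows are $E, v_t, R_1,\ldots, R_{m-2}$; let $c_t=(C_{1,1},\ldots,C_{1,m})$ be the vector of cofactors of $\alpha_1,\ldots,\alpha_m$. The independence hypothesis on $\{w_1,w_2,R_1,\ldots,R_{m-2}\}$ forces $v_t\notin V$ for every $t$ (otherwise a nontrivial relation among $w_1,w_2,R_1,\ldots,R_{m-2}$ would arise), so $\{v_t,R_1,\ldots,R_{m-2}\}$ is linearly independent and $c_t\ne 0$. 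By Lemma \ref{L-orth}, $c_t$ is orthogonal to $v_t$ and to each $R_i$; hence $c_t\in V^\perp=N$, and it spans the unique line in $V^\perp$ perpendicular (within $V^\perp$) to the projection of $v_t$ onto $V^\perp$.

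The heart of the argument is to show that the lines $\spn\{c_t\}$ cover every line of $N$ and that both signs are realized. I would establish this by projecting: let $\tilde w_1,\tilde w_2$ denote the orthogonal projections of $w_1,w_2$ onto $V^\perp$. The hypothesis implies that $\tilde w_1,\tilde w_2$ are linearly independent, so they form a basis of the two-dimensional space $V^\perp$. Consequently, the curve $t\mapsto\cos t\,\tilde w_1+\sin t\,\tilde w_2$ passes through every direction of $V^\perp$, and its perpendicular $c_t$ inside $V^\perp$ likewise assumes every direction. A direct inspection also gives $c_{t+\pi}=-c_t$ (passing from $t$ to $t+\pi$ negates the second row of $A_t$, hence negates each cofactor), so every line of $N$ is realized by $c_t$ with either orientation. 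Combining these observations, $\mathcal{X}$ is scalable iff $N$ contains a non-negative nonzero vector iff some $c_t$ has all entries non-positive or all non-negative, which is the statement of the theorem. The main obstacle I anticipate is the sweep step: cleanly ensuring that the cofactor vectors hit every line of $N$, not merely a proper subfamily. This is precisely where the hypothesis on $\{w_1,w_2,R_1,\ldots,R_{m-2}\}$ is used, through the reduction to a basis $\tilde w_1,\tilde w_2$ of $V^\perp$.
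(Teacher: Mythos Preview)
Your proposal is correct and follows the same overall strategy as the paper: reduce scalability to the existence of a sign-coherent nonzero vector in the two-dimensional space $N=V^\perp$ via Theorem~\ref{thm-classify}, place the cofactor vectors $c_t$ inside $N$ using Lemma~\ref{L-orth}, and then show the family $\{c_t\}$ exhausts every direction of $N$.

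The one genuine difference is in how the ``sweep'' step is carried out. You project $w_1,w_2$ orthogonally onto $V^\perp$, note that $\tilde w_1,\tilde w_2$ form a basis there, and argue geometrically that the ellipse $t\mapsto\cos t\,\tilde w_1+\sin t\,\tilde w_2$ visits every direction, hence so does its perpendicular line $\spn\{c_t\}$. The paper instead exploits the \emph{multilinearity of the determinant in its rows}: writing $\xi_j$ for the cofactor vector obtained by placing $w_j$ in the second row, one has $c_t=\cos t\,\xi_1+\sin t\,\xi_2$ directly, and the hypothesis on $\{w_1,w_2,R_1,\ldots,R_{m-2}\}$ is used to show $\xi_1,\xi_2$ are linearly independent (hence a basis of $N$). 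This makes the sweep entirely algebraic and avoids the projection/ellipse detour; it also makes the observation $c_{t+\pi}=-c_t$ automatic. Your geometric argument is perfectly valid, but the multilinearity shortcut is worth knowing, since it gives an explicit linear parametrization $c_t=\cos t\,\xi_1+\sin t\,\xi_2$ of $N$ rather than merely a qualitative coverage statement.
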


 \begin{proof}  By Theorem \ref{thm-classify}, $\mathcal{X}$ is scalable if and only if there exsits a non-negative, non-zero vector $(a_1, \ldots, a_m)$  in the orthogonal complement of the row space of ${\tilde\theta}_{\mathcal{X}}$.  
 	Let $\xi_j$ denote the vector of the cofactors of the $(\alpha_1,\ldots, \alpha_m)$ in the matrix $A_j=(E, w_j, R_1, \ldots, R_{m-2})$. Clearly, $\xi_j \in \left(\spn\{R_i\}_{i=1}^{m-2}\right)^\perp$  and $\xi_j\perp  w_j$, $j=1, 2$.  
 	
 	Let us prove that  $ \xi_1, \xi_2$ are linearly independent, and so they form a basis for $\left(\spn\{R_i\}_{i=1}^{m-2}\right)^\perp$.  
 	Assume that $\alpha\xi_1+\beta\xi_2=0$ for some $\alpha,\beta\in\RR$. By the multilinear property of the determinant, $\alpha\xi_1+\beta\xi_2$ is  the vector of the cofactors of  $E=(\alpha_1, \ldots, \alpha_m)$ in the matrix whose rows are $(E, \alpha w_1+\beta w_2, R_1, \ldots, R_{m-2})$.  By assumption,  $\alpha w_1+\beta w_2, R_1, \ldots, R_{m-2}$ are linearly independent whenever $(\alpha, \beta)\ne (0,0) $, and so the   vector of the cofactors of $E$ can only be zero if $\alpha=\beta=0$.

 We have proved that every non-zero vector $\xi\in \left(\spn\{R_i\}_{i=1}^{m-2}\right)^\perp$ can be written as $\xi=a\xi_1 + b\xi_2$ for some $a, b\in\RR, (a, b)\not=(0,0)$. We can let $a= \lambda \cos t$ and $b=\lambda \sin t$, with $\lambda >0$ and $t\in [0, 2\pi)$, and write 
 	$\xi=\lambda[(\cos t)  \xi_1 + (\sin t)  \xi_2]$.  Thus, $\xi$ has non-negative components if and only if the same is true of $(\cos t)  \xi_1 + (\sin t)  \xi_2$. By the multilinear property of the determinant, 
 	$(\cos t)  \xi_1 + (\sin t)  \xi_2$ is the vector of the cofactors of $E $ in the matrix  $A_t$ whose rows are $(E, (\cos t) w_1+(\sin t) w_2, R_1, \ldots, R_{m-2})$; equivalently, with the notation previously introduced, $(\cos t)  \xi_1 + (\sin t)  \xi_2$ is  the vector of the cofactors of  the $\alpha_j$'s in the expansion of $\det A_t= \cos t \det  A_1+\sin t \det A_2$. This concludes the proof of the theorem.  
 	 \end{proof}

\begin{ex}

Let $\mathcal{X}=\{x_i\}_{i=1}^4$ be a unit-norm frame of  vectors in $\RR^2$, with   
  $x_1=(1,0)$ and  
   $$
    x_2= (\cos \alpha, \sin\alpha), \ x_3=(\cos\beta, \sin\beta), \ x_4=(\cos \gamma, \sin \gamma), $$ with $0<\alpha< \frac \pi 2\leq \beta <\gamma\leq \pi$. 
	 This frame is scalable because the vectors do not lie in the same open quadrant of $\RR^2$. We now find conditions to ensure that the frame is strictly scalable.

The reduced diagram matrix is 
$$
\tilde \theta_ {\mathcal{X}}=\left(\begin{matrix} 1 & \cos(2\alpha) & \cos(2\beta)  & \cos(2\gamma) \cr 0 &   \sin(2\alpha) & \sin(2\beta)& \sin(2\gamma)  \end{matrix}\right) = \left(\begin{matrix}  R_1\cr R_2\end{matrix}\right).
$$ 
The conditions on $\alpha, \beta$, and $\gamma$ imply that  $\tilde \theta_ {\mathcal{X}}$ has rank 2. 
	We observe that the vectors $w_1= (0, 0, 1, 0)$ and $w_2= (0, 0, 0, 1)$ are linearly independent from $R_1$, $R_2$ since the determinant of the matrix whose rows are $(w_1, w_2, R_1, R_2)$ is $\sin(2\alpha)\not=0$.
By Theorem \ref{T-codim2}, the frame is strictly scalable if and only if we can find $t\in [0, 2\pi)$ for which the cofactors of the $\alpha_j$'s in the matrix
$$
\left(\begin{matrix} \alpha_1&\alpha_2&\alpha_3&\alpha_4\cr 0 &0&\cos t & \sin t \cr 1 & \cos(2\alpha) & \cos(2\beta)  & \cos(2\gamma) \cr 0 &   \sin(2\alpha) & \sin(2\beta)& \sin(2\gamma)   \end{matrix}\right) 
$$
are positive or all negative. The determinant of the matrix above is $\alpha_1 A_1+\alpha_2A_2+\alpha_3A_3+\alpha_4A_4$, where
\begin{align*}
A_1&=\sin t \sin(2\beta-2\alpha) +\cos t\sin(2\alpha-2\gamma);\\
A_2&=\cos t\sin(2\gamma)-\sin t\sin(2\beta);\\
A_3&=\sin t\sin(2\alpha);\\
A_4&=-\cos t\sin(2\alpha).
\end{align*}
Thus, $\mathcal{X}$ is strictly scalable if and only if there exists $t\in [0, 2\pi)$ such that the $A_j$'s are all positive or all negative. Since we have assumed 
	$\alpha<\frac \pi 2$, the coefficients $A_3$ and $A_4$ are both positive if $t\in (\pi/2, \pi)$.
	
	If we assume  $\pi/2<\beta<\gamma<\alpha+\pi/2$,  it is easy to verify that   $A_i>0$ for   $i=1, 2 $ and for all $t\in (\pi/2, \pi) $. So, $\mathcal{X}$ is scalable with positive scalings $c=\{\sqrt{A_i}\}_{i=1}^4$. 
	Since $t$ is arbitrary in $(\pi/2, \pi)$,  we may have infinitely many ways of choosing a scaling that  makes the given frame tight.
\end{ex}

\section{A New Approach to Scaling} 

In this section, we give a new approach to scaling problem. 
We are going to break the scaling problem into two smaller ones, each of which is easily solved giving a simple way to check scaling. Let us start with introducing new notation.


\begin{notation} We denote $$\ell_2(m)^+:=\{a=\{a_i\}_{i=1}^m\in \ell_2(m): a_i\geq 0 \mbox{ for all } i\in [m]\}.$$
If $u=(u(1), u(2), \ldots, u(m)),\ v=(v(1), v(2), \ldots, v(m))\in \ell_2(m)$, we define
		\[u\bullet v =(u(1)v(1),u(2)v(2),\ldots,u(m)v(m)) \mbox{ and } u^2=u\bullet u.
		\]
		
		If $a=(a_1,a_2,\ldots,a_m)\in \ell_2(m)$, we denote $a^2=(a_1^2,a_2^2,\ldots,a_m^2)$.
If $\mathcal{X}=\{x_i\}_{i=1}^m\subset \RR^n$ we denote 
		\[ \tilde{\mathcal{X}}=\{u_j:=(x_1(j), x_2(j),\ldots,x_m(j)): j\in [n]\},
		\]
\[\tilde{\mathcal{X}}^2=\{u^2: u\in \tilde{\mathcal{X}}\}, \mbox{ and }\]
		\[ W(\ell_2(m)^+,\tilde{\mathcal{X}}^2)= \{a\in \ell_2^+(m):\langle a, u^2\rangle=1\mbox{ for all } u \in \tilde{\mathcal{X}}\}.\]
If $Z\subset \RR^n$ we denote
	\[ Z^+=\{z\in Z:z(i)\ge 0\mbox{ for all } i\in [n]\}.\]
	
\end{notation}

\begin{definition}
	Let $\mathcal{X}=\{x_i\}_{i=1}^m$ be a frame for $\RR^n$. If $W(\ell_2(m)^+,\tilde{\mathcal{X}}^2)\not= \emptyset$, we say that $\mathcal{X}$ is {\bf normalized scalable}.
\end{definition}

\begin{remark}\label{R13}
	For $x,y\in \RR^n$,
	$\langle x,y\rangle=1$ if and only if $x=cy+dv$ for some scalars $c, d$ with $c\|y\|^2=1$ and $v\in y^{\perp}$.
\end{remark}

\begin{theorem}\label{T12}
	Let $\mathcal{X}=\{x_i\}_{i=1}^m$ be a frame for $\RR^n$ and let $\{c_j\}_{i=1}^n$ be a sequence of scalars so that $c_j\|u_j^2\|^2=1$, where $u_j=(x_1(j), x_2(j), \ldots, x_m(j))\in \tilde{\mathcal{X}}$. Then
	\[ W(\ell_2(m)^+,\tilde{\mathcal{X}}^2)=\bigcap_{j=1}^n(c_ju_j^2+(u_j^2)^{\perp})^+.\]
\end{theorem}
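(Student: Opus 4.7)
The plan is to prove the set equality by two inclusions, with the core observation being that the normalization $c_j\|u_j^2\|^2 = 1$ makes $c_j u_j^2$ exactly the orthogonal projection of any vector $a$ satisfying $\langle a, u_j^2\rangle = 1$ onto the line through $u_j^2$. In other words, for each $j\in [n]$, the scalar condition $\langle a, u_j^2\rangle = 1$ is equivalent to the affine membership condition $a \in c_j u_j^2 + (u_j^2)^\perp$. This is exactly the content of Remark \ref{R13} applied with $y = u_j^2$, once one notes that the unique scalar $c$ with $c\|u_j^2\|^2 = 1$ is $c_j$.

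For the forward inclusion, I would take $a \in W(\ell_2(m)^+, \tilde{\mathcal{X}}^2)$. By definition $a$ has non-negative entries and $\langle a, u_j^2\rangle = 1$ for every $u_j \in \tilde{\mathcal{X}}$. Applying the observation above, for each $j$ there exists $v_j \in (u_j^2)^\perp$ with $a = c_j u_j^2 + v_j$, so $a \in c_j u_j^2 + (u_j^2)^\perp$. Combined with $a \geq 0$ componentwise, this places $a \in (c_j u_j^2 + (u_j^2)^\perp)^+$ for every $j\in [n]$, hence in the intersection.

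For the reverse inclusion, I would take $a \in \bigcap_{j=1}^n (c_j u_j^2 + (u_j^2)^\perp)^+$. Then $a$ has non-negative entries, and for each $j$ we can write $a = c_j u_j^2 + v_j$ with $v_j \perp u_j^2$. Pairing with $u_j^2$ gives
\[
\langle a, u_j^2\rangle = c_j \|u_j^2\|^2 + \langle v_j, u_j^2\rangle = 1 + 0 = 1,
\]
so $a$ satisfies all the defining conditions of $W(\ell_2(m)^+, \tilde{\mathcal{X}}^2)$.

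There is essentially no obstacle here: the entire content of the theorem is the affine reformulation of the hyperplane condition $\langle a, u_j^2\rangle = 1$ afforded by Remark \ref{R13}, together with tracking the non-negativity constraint through the intersection. The only minor bookkeeping point is to verify that $c_j$ is the correct multiple; that is guaranteed by the hypothesis $c_j \|u_j^2\|^2 = 1$.
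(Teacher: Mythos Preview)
Your proof is correct and is precisely the unpacking of the paper's one-line argument, which simply says ``This is immediate from Remark~\ref{R13}.'' You have made explicit the two inclusions that Remark~\ref{R13} encodes, namely that $\langle a,u_j^2\rangle=1$ is equivalent to $a\in c_ju_j^2+(u_j^2)^{\perp}$, and you have correctly tracked the non-negativity constraint through the intersection; there is no meaningful difference in approach.
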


\begin{proof}
	This is immediate from Remark \ref{R13}.
\end{proof}

\begin{lemma}
	$\mathcal{X}=\{x_i\}_{i=1}^m\subset\RR^n $ is normalized scalable if and only if there are constants $\{a_i\}_{i=1}^m$ so that the rows of the synthesis matrix $[a_ix_i]_{i=1}^m$
	all square sum to 1.
\end{lemma}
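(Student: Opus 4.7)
The plan is to unwind the definition of $W(\ell_2(m)^+,\tilde{\mathcal{X}}^2)$ and identify it explicitly with the squared row-norms of the scaled synthesis matrix. Membership of a vector $b=(b_1,\ldots,b_m)\in\ell_2^+(m)$ in $W(\ell_2(m)^+,\tilde{\mathcal{X}}^2)$ means that $b_i\geq 0$ for all $i$ and that $\langle b, u_j^2\rangle =\sum_{i=1}^m b_i (x_i(j))^2=1$ for every $j\in [n]$. On the other hand, the $j$-th row of the scaled synthesis matrix $[a_ix_i]_{i=1}^m$ has entries $a_i x_i(j)$, so its squared Euclidean norm equals $\sum_{i=1}^m a_i^2 (x_i(j))^2$. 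The lemma is therefore essentially the change of variables $b_i=a_i^2$.

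For the forward direction, I would assume $\mathcal{X}$ is normalized scalable and pick any $b\in W(\ell_2(m)^+,\tilde{\mathcal{X}}^2)$. Since $b_i\geq 0$, I set $a_i:=\sqrt{b_i}$, and then for each $j\in [n]$ the squared $j$-th row norm of $[a_ix_i]_{i=1}^m$ is $\sum_{i=1}^m a_i^2 (x_i(j))^2=\sum_{i=1}^m b_i(x_i(j))^2=\langle b, u_j^2\rangle=1$, as required.

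For the converse, assume there exist scalars $\{a_i\}_{i=1}^m$ such that every row of $[a_ix_i]_{i=1}^m$ has squared norm equal to $1$. The constants $a_i$ need not be nonnegative, but this is harmless: set $b_i:=a_i^2\geq 0$. Then $\langle b, u_j^2\rangle=\sum_{i=1}^m a_i^2(x_i(j))^2=1$ for every $j\in[n]$, so $b\in W(\ell_2(m)^+,\tilde{\mathcal{X}}^2)$ and $\mathcal{X}$ is normalized scalable.

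There is no real obstacle here; the lemma is a direct reformulation once one notices that the relevant inner product $\langle b,u_j^2\rangle$ is precisely the squared $\ell_2$ norm of the $j$-th row of the scaled synthesis matrix after the substitution $b_i=a_i^2$. The only conceptual point worth flagging is that the sign of $a_i$ is irrelevant to the row-norm condition, which is what allows the square-root/square passage between the two formulations.
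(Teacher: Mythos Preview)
Your proof is correct and follows essentially the same approach as the paper: both identify the squared $j$-th row norm of $[a_ix_i]_{i=1}^m$ as $\sum_{i=1}^m a_i^2 (x_i(j))^2=\langle a^2,u_j^2\rangle$ and then observe that this equals $1$ for all $j$ precisely when $a^2\in W(\ell_2(m)^+,\tilde{\mathcal{X}}^2)$. The paper compresses the two directions into a single biconditional, while you spell them out separately and note explicitly that the sign of $a_i$ is irrelevant, but the substance is identical.
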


\begin{proof}
	The square sum of the $j$-th row, $j\in [n]$, of the synthesis matrix is:
	\[ \sum_{i=1}^ma_i^2x_i(j)^2=\langle a^2,u_j^2\rangle,\] where $u_j=(x_1(j), x_2(j), \ldots, x_m(j))\in \tilde{\mathcal{X}}$ and $a=(a_1, a_2, \ldots, a_m)\in \ell_2(m)$.
	So this equals 1 for all $j\in [n]$ if and only if $a^2\in W(\ell_2(m)^+,\tilde{\mathcal{X}}^2)$.
\end{proof}

\begin{lemma}
	For any $\mathcal{X}=\{x_i\}_{i=1}^m\subset \RR^n$, the set $W(\ell_2(m)^+,\tilde{\mathcal{X}}^2)$ is
	a convex set.
\end{lemma}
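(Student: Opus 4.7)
The plan is to verify convexity directly from the definition by taking two arbitrary elements of $W(\ell_2(m)^+,\tilde{\mathcal{X}}^2)$ and showing every convex combination satisfies both defining conditions: the non-negativity condition and the affine constraint $\langle a, u^2\rangle = 1$ for each $u\in \tilde{\mathcal{X}}$.

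First I would fix $a, b \in W(\ell_2(m)^+,\tilde{\mathcal{X}}^2)$ and $t\in [0,1]$, and set $c = ta + (1-t)b$. For the membership $c\in \ell_2(m)^+$, I just need to note that each coordinate $c_i = ta_i + (1-t)b_i$ is a non-negative combination of non-negative numbers, hence non-negative; so $\ell_2(m)^+$ is itself a convex cone.

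Next, for each $u\in \tilde{\mathcal{X}}$, the map $a \mapsto \langle a, u^2\rangle$ is linear in $a$, so
\[
\langle c, u^2\rangle = t\langle a, u^2\rangle + (1-t)\langle b, u^2\rangle = t\cdot 1 + (1-t)\cdot 1 = 1.
\]
Hence $c$ satisfies all $n$ affine constraints, so $c\in W(\ell_2(m)^+,\tilde{\mathcal{X}}^2)$.

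Conceptually, the statement follows simply because $W(\ell_2(m)^+,\tilde{\mathcal{X}}^2)$ is the intersection of the convex cone $\ell_2(m)^+$ with the $n$ affine hyperplanes $\{a : \langle a, u_j^2\rangle = 1\}$, $j\in [n]$, and any intersection of convex sets is convex. This is essentially a one-line observation, and I don't anticipate any obstacle; the only thing to be careful about is that the defining conditions are all affine (or half-space) conditions in $a$ itself, which they clearly are. No use of the diagram matrix machinery from Section 2 is required here.
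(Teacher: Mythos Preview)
Your proof is correct and follows essentially the same approach as the paper: pick two elements, form a convex combination, check non-negativity of the coordinates, and use linearity of the inner product to verify the affine constraint $\langle\,\cdot\,, u^2\rangle=1$. Your additional remark that $W(\ell_2(m)^+,\tilde{\mathcal{X}}^2)$ is an intersection of a convex cone with affine hyperplanes is a nice conceptual summary but not a different method.
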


\begin{proof}
Let $0<\lambda<1$ and $a, b\in W(\ell_2(m)^+,\tilde{\mathcal{X}}^2)$, then oviously $\lambda a+(1-\lambda)b \in \ell_2(m)^+$. Moreover, for any $u\in \tilde{\mathcal{X}}$, we have
\[\langle \lambda a+(1-\lambda)b, u\rangle= \lambda \langle a, u\rangle +(1-\lambda)\langle b, u\rangle=1.\]
This shows that $\lambda a+(1-\lambda)b\in W(\ell_2(m)^+, \tilde{\mathcal{X}}^2)$.
\end{proof}

\begin{ex}
	It is possible that $W(\ell_2(m)^+, \tilde{\mathcal{X}}^2)=\emptyset$. To see this, take a Hadamard matrix $[\pm1]$ in $\RR^n$ and multiply the last row by 2. Let $\mathcal{X}$ be the frame whose frame vectors are the columns of the resulting matrix and denote by $u_j$ the $j$-th row of this matrix. Then $u_j=(\pm1,\ldots,\pm 1)$ for $1\le i \le n-1$ and $u_n=(\pm 2,\pm 2, \ldots,\pm 2)$. For any $a=(a_1, a_2\ldots,a_m)\in\ell_2^+(m)$ we have 
	\[ \langle a, u_1^2\rangle = \sum_{i=1}^ma_i\cdot 1=\sum_{i=1}^ma_i,\]
	while
	\[ \langle a,u_n^2\rangle = \sum_{i=1}^m4a_i.\]
\end{ex}

\begin{theorem}
	Let $\mathcal{X}=\{x_i\}_{i=1}^m$ be a frame for $\RR^n$ with the row of its synthesis matrix $u_j=(x_1(j), x_2(j), \ldots, x_m(j))\in \tilde{\mathcal{X}}, j\in [n]$. Let $\{e_i\}_{i=1}^m$ be the standard orthonormal basis for $\ell_2(m)$. The following are equivalent.
	\begin{enumerate}
		\item $\mathcal{X}$ is normalized scalable.
		\item There is a set $J\subset [n]$ with $|J|\le \rank\{u_j^2\}_{j=1}^n$, a set $I\subset [m]$, and a vector $a\in \ell_2(m)^+$ such that if $P_I$ is the orthogonal projection of $\ell_2^m$ onto $\spn\{e_i\}_{i\in I}$, then
		$\{P_Iu_j^2\}_{j\in J}$ is a basis for $\spn\{P_Iu_j^2\}_{j=1}^n$. Moreover, for every $j\in J$, $\langle a, P_Iu_j^2\rangle =1 $,  and for every $j\notin J$, $P_Iu_j^2=\sum_{k\in J}c_kP_Iu_k^2$ with
		$\sum_{k\in J}c_k=1$.
	\end{enumerate}
\end{theorem}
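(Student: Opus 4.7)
The plan is to give two short direct arguments that translate between condition (1), which is the existence of $a\in\ell_2(m)^+$ satisfying the $n$ linear equations $\langle a,u_j^2\rangle=1$, and condition (2), which merely picks out a basis among these constraints and records the compatibility of the redundant ones. The workhorse identity is the self-adjointness of $P_I$, giving $\langle a,P_Iu_j^2\rangle=\langle P_Ia,u_j^2\rangle$; once this is in hand the equivalence is essentially bookkeeping, so I do not expect any genuine obstacle.

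For the implication $(1)\Rightarrow(2)$, I would start with $a\in W(\ell_2(m)^+,\tilde{\mathcal{X}}^2)$ and set $I:=\{i\in[m]:a_i>0\}$, so that $P_Ia=a$. Choose $J\subset[n]$ to be any index set for which $\{P_Iu_j^2\}_{j\in J}$ is a basis of $\spn\{P_Iu_j^2\}_{j=1}^n$; since $P_I$ cannot increase rank, $|J|\le \rank\{u_j^2\}_{j=1}^n$ automatically. For $j\in J$, using $P_Ia=a$,
\[
\langle a,P_Iu_j^2\rangle=\langle P_Ia,u_j^2\rangle=\langle a,u_j^2\rangle=1.
\]
For $j\notin J$, write $P_Iu_j^2=\sum_{k\in J}c_kP_Iu_k^2$; pairing with $a$ on both sides yields $1=\sum_{k\in J}c_k$, as required.

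For the converse $(2)\Rightarrow(1)$, given the data $(I,J,a)$ from (2), define $b:=P_Ia$. Since $P_I$ just zeros coordinates outside $I$, we have $b\in\ell_2(m)^+$. For $j\in J$, self-adjointness gives $\langle b,u_j^2\rangle=\langle a,P_Iu_j^2\rangle=1$. For $j\notin J$, expand $P_Iu_j^2=\sum_{k\in J}c_kP_Iu_k^2$ with $\sum_{k\in J}c_k=1$, and compute
\[
\langle b,u_j^2\rangle=\langle a,P_Iu_j^2\rangle=\sum_{k\in J}c_k\langle a,P_Iu_k^2\rangle=\sum_{k\in J}c_k=1.
\]
Thus $\langle b,u_j^2\rangle=1$ for every $j\in[n]$, so $b\in W(\ell_2(m)^+,\tilde{\mathcal{X}}^2)$ and $\mathcal{X}$ is normalized scalable. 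The only subtle point worth flagging is that the construction $I=\mathrm{supp}(a)$ in the forward direction is consistent because $a\ne 0$ (otherwise $\langle a,u_j^2\rangle=0\ne1$), so $I$ is non-empty and $P_Ia=a$ makes sense.
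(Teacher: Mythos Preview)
Your proof is correct and follows essentially the same approach as the paper: in $(1)\Rightarrow(2)$ you take $I=\mathrm{supp}(a)$ and extract a basis indexed by $J$, exactly as the paper does, and in $(2)\Rightarrow(1)$ you reverse the computation. The one small improvement in your write-up is the explicit passage to $b:=P_Ia$ in the converse; the paper tacitly writes $\langle a,u_j^2\rangle=\langle a,P_Iu_j^2\rangle$ without noting that this requires $a$ to be supported on $I$, whereas your use of $b$ makes the argument airtight. You also supply the one-line justification of the rank bound $|J|\le\rank\{u_j^2\}_{j=1}^n$, which the paper omits.
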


\begin{proof}
	$(1)\Rightarrow (2)$: Choose $a=\{a_i\}_{i=1}^m\in \ell_2(m)^+$ so that $\langle a,u_j^2\rangle =1$ for all $j\in [n]$. Let $I=\{i\in [m]:
	a_i\not= 0\}$. Choose $J\subset [n]$ so that $\{P_Iu_j^2\}_{j\in J}$ is a basis for $\spn \{P_Iu_j^2\}_{j=1}^n$. Then 
	$\langle a,P_Iu_j^2\rangle=1$ for all $j\in J$. If $j\in J^c$ then $P_Iu_j^2=\sum_{k\in J}c_kP_Iu_k^2$ and 
	\[ 1=\langle a,P_Iu_j^2\rangle = \langle a,\sum_{k\in J}c_kP_Iu_k^2\rangle =\sum_{k\in J}c_k\langle a,P_Iu_k^2\rangle =
	\sum_{k\in J}c_k.\]

	\noindent$(2)\Rightarrow (1)$: By assumption
	there are $a=(a_1, a_2, \ldots, a_m)\in \ell_2(m)^+$ so that 
	\[\langle a, u_j^2\rangle= \langle a,P_Iu_j^2\rangle=1\mbox{ for all } j\in J.\]
	If $j\in J^c$ then $P_Iu_j^2=\sum_{k\in J}c_kP_Iu_k^2$ and so
	\[ \langle a, u_j^2\rangle =\langle a,P_Iu_j^2\rangle = \langle a, \sum_{k\in J}c_kP_Iu_k^2\rangle = \sum_{k\in J}c_k\langle a,P_Iu_k^2\rangle=
	\sum_{k\in J}c_k =1.\]
	So $\{x_i\}_{i=1}^m$ is normalized scalable.
\end{proof}

For the second part of this approach, we  use the following notation:
 

\begin{notation} Let $\mathcal{X}=\{x_i\}_{i=1}^m$ be a frame for $\RR^n$ with the rows of its synthesis matrix (represented in the standard basis $\{e_i\}_{i=1}^n$ of $\RR^n$), $u_j=(x_1(j), x_2(j),\ldots,x_m(j))$, $j\in [n]$.
		We denote 
		\[ \mathcal{X}^{\perp}=\{u_i\bullet u_j:1\le i\not= j\le n\},\]
		\[ V(\ell_2(m)^+,\mathcal{X}^{\perp})= \{a\in \ell_2(m)^+ : \langle a, u\rangle=0\mbox{ for all } u\in \mathcal{X}^\perp\}.\]
	\end{notation}
\begin{definition} A frame $\mathcal{X}=\{x_i\}_{i=1}^m$ for $\RR^n$ is called orthogonal scalable if $V(\ell_2(m)^+,\mathcal{X}^{\perp})\not=\emptyset.$
\end{definition}
\begin{remark}\label{R2} 1) If $\{x_i\}_{i=1}^m$ is a frame for $\RR^n$ written with respect to the eigenbasis of its frame operator, then $V(\ell_2(m)^+,\mathcal{X}^{\perp})=\ell_2(m)^+$. This is because the rows of the synthesis matrix are orthogonal.

2) $u_i\bullet u_j:1\le i\not= j\le n$ are the last $n(n-1)/2$ rows of the reduced diagram matrix defined in Section 2.
\end{remark}

\begin{theorem}
	If $\mathcal{X}=\{x_i\}_{i=1}^m$ is a frame for $\RR^n$, then $V(\ell_2(m)^+, \mathcal{X}^{\perp})$ is a positive cone in $\RR^n$.
\end{theorem}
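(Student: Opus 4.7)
The plan is to verify the two defining closure properties of a positive (convex) cone directly from the definition of $V(\ell_2(m)^+,\mathcal{X}^{\perp})$, namely closure under non-negative scalar multiplication and closure under addition. (I read the statement as asserting that $V(\ell_2(m)^+,\mathcal{X}^{\perp})$ is a positive cone inside $\ell_2(m)$; the ``$\RR^n$'' in the statement appears to be a typo for $\RR^m$.) The key observations are that $\ell_2(m)^+$ is itself closed under these operations, and that the constraint $\langle a,u\rangle=0$ for every $u\in \mathcal{X}^{\perp}$ is linear in $a$.

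Concretely, first I would take $a\in V(\ell_2(m)^+,\mathcal{X}^{\perp})$ and $\lambda\ge 0$. The vector $\lambda a$ has non-negative coordinates, so $\lambda a\in \ell_2(m)^+$, and for every $u\in \mathcal{X}^{\perp}$, the linearity of the inner product gives $\langle \lambda a,u\rangle = \lambda\langle a,u\rangle = 0$. Hence $\lambda a\in V(\ell_2(m)^+,\mathcal{X}^{\perp})$. Second, I would take $a,b\in V(\ell_2(m)^+,\mathcal{X}^{\perp})$. Since the coordinates of $a$ and $b$ are all non-negative, so are those of $a+b$, placing $a+b$ in $\ell_2(m)^+$; moreover, for each $u\in \mathcal{X}^{\perp}$,
\[
\langle a+b,u\rangle = \langle a,u\rangle+\langle b,u\rangle = 0+0 = 0,
\]
so $a+b\in V(\ell_2(m)^+,\mathcal{X}^{\perp})$. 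Combining these two observations yields $\lambda a+\mu b \in V(\ell_2(m)^+,\mathcal{X}^{\perp})$ for all $a,b$ in the set and all $\lambda,\mu\ge 0$, which is exactly the cone property.

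There is essentially no main obstacle here, since the statement is a direct structural consequence of two facts that are trivial on their own: $\ell_2(m)^+$ is a positive cone, and the subspace $\{a:\langle a,u\rangle = 0 \text{ for all } u\in \mathcal{X}^{\perp}\}$ is a linear subspace, hence in particular a cone. The intersection of a positive cone with a linear subspace is again a positive cone, which is the content of the theorem. The only matter that requires any care at all is to confirm that the empty intersection case is not an issue — by convention the empty set vacuously satisfies the cone axioms — so no hypothesis guaranteeing $V(\ell_2(m)^+,\mathcal{X}^{\perp})\ne\emptyset$ is needed here.
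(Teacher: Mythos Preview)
Your proof is correct and follows essentially the same approach as the paper's: the paper verifies in one line that for $a,b\in V(\ell_2(m)^+,\mathcal{X}^{\perp})$ and $\alpha,\beta\ge 0$ one has $\alpha a+\beta b\ge 0$ and $\langle \alpha a+\beta b,u_i\bullet u_j\rangle=0$, which is exactly your argument (with your scalar-multiplication and addition steps combined). Your observation that ``$\RR^n$'' should read ``$\RR^m$'' (or $\ell_2(m)$) is correct; note also that the set is never empty since it always contains the zero vector, so the vacuous case you mention does not actually arise.
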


\begin{proof}
	Let $a, b\in V(\ell_2(m)^+, \mathcal{X}^{\perp})$ and $\alpha, \beta \ge0$. Then $\alpha a +\beta b\geq 0$, and for all $1\le i \not= j \le n$ we have that
	\[ \langle \alpha a +\beta b,u_i\bullet u_j\rangle = \alpha\langle a,u_i\bullet u_j\rangle +\beta\langle b,u_i\bullet u_j\rangle =0.\]
	The claim is proven.
\end{proof}

Now the following theorem is obvious.

\begin{theorem}
	A frame $\mathcal{X}=\{x_i\}_{i=1}^m$ for $\RR^n$ is scalable if and only if $W(\ell_2(m)^+, \tilde{\mathcal{X}}^2)\cap V(\ell_2(m)^+, \mathcal{X}^{\perp})\not= \emptyset$. In other words, the convex
	set intersects the positive cone.
\end{theorem}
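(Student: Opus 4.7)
The plan is to translate the two conditions in Theorem \ref{classify tight} (orthogonality of rows and equal row norms of the synthesis matrix) into membership of $a^2$ in $W(\ell_2(m)^+, \tilde{\mathcal{X}}^2)$ and $V(\ell_2(m)^+, \mathcal{X}^{\perp})$ respectively, which is a direct computation with the $\bullet$ notation.

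For the forward direction, suppose $\mathcal{X}$ is scalable, so there exist $a_i\ge 0$ such that $\{a_ix_i\}_{i=1}^m$ is $A$-tight for some $A>0$. The $j$-th row of the synthesis matrix of the scaled frame is $a\bullet u_j$. By Theorem \ref{classify tight}, tightness is equivalent to $\|a\bullet u_j\|^2=A$ for all $j$ and $\langle a\bullet u_j,a\bullet u_k\rangle=0$ for $j\ne k$. Rewriting these inner products as $\langle a^2,u_j^2\rangle=A$ and $\langle a^2,u_j\bullet u_k\rangle=0$ and setting $b=a^2/A$, we immediately get $b\in\ell_2(m)^+$ with $\langle b,u_j^2\rangle=1$ for all $j\in[n]$ and $\langle b,u_j\bullet u_k\rangle=0$ for $j\ne k$, i.e., $b\in W(\ell_2(m)^+,\tilde{\mathcal{X}}^2)\cap V(\ell_2(m)^+,\mathcal{X}^{\perp})$.

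For the converse, suppose $b\in W(\ell_2(m)^+,\tilde{\mathcal{X}}^2)\cap V(\ell_2(m)^+,\mathcal{X}^{\perp})$. Since $b\in\ell_2(m)^+$, we may set $a_i=\sqrt{b_i}\ge 0$, so that $a^2=b$. Then for every $j\in[n]$, the squared norm of the $j$-th row of the synthesis matrix of $\{a_ix_i\}_{i=1}^m$ equals $\langle a^2,u_j^2\rangle=\langle b,u_j^2\rangle=1$, and for every $j\ne k$, the inner product of the $j$-th and $k$-th rows equals $\langle a^2,u_j\bullet u_k\rangle=\langle b,u_j\bullet u_k\rangle=0$. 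Thus by Theorem \ref{classify tight}, $\{a_ix_i\}_{i=1}^m$ is a Parseval tight frame, so $\mathcal{X}$ is scalable.

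There is no real obstacle here: the content of the theorem is the translation of Theorem \ref{classify tight} into the $\bullet$-notation together with the substitution $a\mapsto a^2$, which is precisely what makes the sets $W$ and $V$ (defined via squared and cross-product rows) the right objects. This is why the authors can justifiably call the theorem ``obvious''; its value lies in the conceptual decomposition of scalability into normalized scalability ($W\ne\emptyset$) plus orthogonal scalability ($V\ne\emptyset$), combined by intersection.
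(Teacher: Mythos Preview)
Your proof is correct and is exactly the argument the paper has in mind: the authors write only ``Now the following theorem is obvious'' and give no proof, and what you have supplied is precisely the obvious verification via Theorem~\ref{classify tight} that the definitions of $W$ and $V$ were engineered to encode. The only small addition worth making explicit is that in the converse direction the scaled family $\{a_ix_i\}_{i=1}^m$ is indeed a frame (not just a tight sequence), which follows because the row conditions force the synthesis matrix to have orthonormal rows and hence full rank~$n$.
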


\section{Scalability of Dual Frames}
Let $\mathcal{X}=\{x_i\}_{i=1}^m$ be a frame for $\RR^n$ with the frame operator $S$. A sequence $\mathcal{Y}=\{y_i\}_{i=1}^m$ in $\RR^n$ is called a {\bf dual frame} for $\mathcal{X}$ if $\mathcal{Y}$
satisfies the reconstruction formula:
\[x=\sum_{i=1}^m\langle x, x_i\rangle y_i=\sum_{i=1}^m\langle x, y_i\rangle x_i, \mbox { for all } x \in \RR^n.\]
 If $y_i=S^{-1}x_i$, $i\in [m]$, then $\mathcal{Y}$ is called the {\bf canonical dual frame}, otherwise it is called an {\bf alternate dual frame}. 

 In this section, we will study scalabilty of the dual frames.
\begin{theorem}
	Every scalable frame has a scalable alternate dual frame.
\end{theorem}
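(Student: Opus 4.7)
The plan is to write down an explicit dual of $\mathcal{X}$ built directly from its scaling coefficients and then verify that this dual is itself scalable. Let $\{a_i\}_{i=1}^m$ be non-negative scalars so that $\{a_i x_i\}_{i=1}^m$ is an $A$-tight frame, which (by the tight-frame reconstruction formula) is equivalent to $\sum_{i=1}^m a_i^2 x_i x_i^* = AI$. I would set
\[ y_i := \frac{a_i^2}{A}\, x_i, \qquad i \in [m], \]
and take $\mathcal{Y} = \{y_i\}_{i=1}^m$ as the candidate alternate dual.

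The verification that $\mathcal{Y}$ is a dual of $\mathcal{X}$ is a one-line computation: for every $x \in \RR^n$,
\[ \sum_{i=1}^m \langle x, x_i\rangle y_i = \frac{1}{A}\sum_{i=1}^m a_i^2 \langle x, x_i\rangle x_i = \frac{1}{A}(AI)x = x, \]
with the companion reconstruction formula following by symmetry. Scalability of $\mathcal{Y}$ is then immediate via the reciprocal scaling $b_i := \sqrt{A}/a_i$ when $a_i > 0$ and $b_i := 0$ otherwise, because
\[ \sum_{i=1}^m b_i^2 y_i y_i^* = \frac{1}{A}\sum_{i=1}^m a_i^2 x_i x_i^* = I, \]
so that $\{b_i y_i\}_{i=1}^m$ is a Parseval frame.

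The one subtle point is the word \emph{alternate}: one must exhibit some index at which $y_i \neq S^{-1} x_i$. If some $a_j = 0$ and $x_j \neq 0$, then $y_j = 0 \neq S^{-1} x_j$ (since $S$ is invertible), and we are done at once; so the $y_i$ above is alternate whenever $\mathcal{X}$ is not strictly scalable. In the strictly scalable case, the equality $y_i = S^{-1} x_i$ for all $i$ forces every $x_i$ to be an eigenvector of $S$ with eigenvalue $A/a_i^2$, which is a very restrictive configuration. When it occurs, I would either pick a non-proportional alternative scaling in the scaling cone (available whenever that cone has dimension $\geq 2$, as in the $\{e_1,e_2,e_1,e_2\}$ example, where switching from a constant scaling to $(2,1,1,2)$ breaks the coincidence) or, failing that, perturb $y_i \mapsto y_i + v_i$ with $\sum_i v_i x_i^* = 0$ (available when $m > n$) and choose $\{v_i\}$ to preserve scalability of the perturbed system. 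This last step—producing a scalable modification that breaks the canonical coincidence—is where I expect the main technical work to lie, since scalability is not an open condition and perturbations must be arranged carefully.
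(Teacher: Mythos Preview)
Your core construction is exactly the paper's: set $y_i=a_i^2x_i$ (the paper normalizes to $A=1$), observe from the Parseval identity that $\{y_i\}$ is a dual of $\{x_i\}$, and scale $y_i$ by $1/a_i$ to recover the Parseval frame $\{a_ix_i\}$. So on the substance of the argument you and the paper agree line for line.

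Where you diverge is on the word \emph{alternate}. The paper's proof simply does not address it: it exhibits $\{a_i^2x_i\}$ as a scalable dual and stops, never checking that this dual differs from $\{S^{-1}x_i\}$. Your extra paragraph is therefore more scrupulous than the paper, not less. That said, your worry is well founded and in fact cannot be fully dispatched: if $\mathcal{X}$ is an orthonormal basis (a scalable frame with $a_i\equiv 1$), then the \emph{only} dual is the canonical one, so under the paper's own definition of ``alternate dual'' the theorem is false without an overcompleteness hypothesis $m>n$. Your two proposed fixes (choose a non-proportional scaling, or perturb within the affine space of duals) both implicitly require $m>n$, which is consistent with this. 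So rather than viewing the last step as ``where the main technical work lies,'' you should read the paper as using ``alternate dual'' loosely to mean ``a dual frame,'' in which case your first two paragraphs already constitute a complete proof matching the paper's.
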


\begin{proof}
	Let $\{x_i\}_{i=1}^m$ be a scalable frame for $\RR^n$ with scaling constants $\{a_i\}_{i=1}^m$. Then after perhaps dropping those vectors which are scaled with zero  constants, we may assume
	there are $a_i>0$ so that $\{a_ix_i\}_{i=1}^m$ is a Parseval frame. So for every $x\in \RR^n$ we have
	\[ x=\sum_{i=1}^m\langle x,a_ix_i\rangle a_ix_i = \sum_{i=1}^m\langle x,x_i\rangle a_i^2x_i.\]
	It follows that $\{a_i^2x_i\}_{i=1}^m$ is a dual frame. But now, if we scale the dual frame by $\frac{1}{a_i}$ we get $\{a_ix_i\}_{i=1}^m$
	which is Parseval.
\end{proof}

\begin{corollary}
	Let $\mathcal{X}=\{x_i\}_{i=1}^m$ be a frame for $\RR^n$. The following are equivalent. 
	\begin{enumerate}
		\item $\{a_ix_i\}_{i=1}^m$ is Parseval.
		\item $\{a_i^2x_i\}_{i=1}^m$ is a dual frame for $\{x_i\}_{i=1}^m$.
	\end{enumerate}
\end{corollary}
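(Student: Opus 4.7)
The plan is to prove each implication by re-arranging scalars inside the frame reconstruction formula, treating the corollary as essentially a one-line computation made from the identity $\langle x, a_i x_i\rangle\, a_i x_i = \langle x, x_i\rangle\, a_i^2 x_i$.

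For $(1)\Rightarrow(2)$, I would start from the assumption that $\{a_i x_i\}_{i=1}^m$ is Parseval, so for every $x\in\RR^n$,
\[
x=\sum_{i=1}^m \langle x, a_i x_i\rangle\, a_i x_i.
\]
Pulling the scalars out of each inner product yields
\[
x=\sum_{i=1}^m \langle x, x_i\rangle\, a_i^2 x_i,
\]
which is the first reconstruction formula in the definition of a dual frame. The second reconstruction formula is obtained identically, since $\langle x, a_i^2 x_i\rangle\, x_i = a_i^2\langle x,x_i\rangle\, x_i = \langle x, a_i x_i\rangle\, a_i x_i$ in the real setting. Hence $\{a_i^2 x_i\}_{i=1}^m$ is a dual frame for $\{x_i\}_{i=1}^m$.

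For $(2)\Rightarrow(1)$, I would reverse the computation. If $\{a_i^2 x_i\}_{i=1}^m$ is dual to $\{x_i\}_{i=1}^m$, then
\[
x=\sum_{i=1}^m \langle x, x_i\rangle\, a_i^2 x_i
=\sum_{i=1}^m \langle x, a_i x_i\rangle\, a_i x_i
\]
for all $x\in\RR^n$, showing that the frame operator of $\{a_i x_i\}_{i=1}^m$ is the identity. Therefore $\{a_i x_i\}_{i=1}^m$ is Parseval.

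There is really no main obstacle: the whole proof is the observation that $a_i^2 = a_i \cdot a_i$ allows the Parseval reconstruction formula for $\{a_i x_i\}$ to be rewritten as the dual-frame reconstruction formula pairing $\{x_i\}$ with $\{a_i^2 x_i\}$, and vice versa. The only mild subtlety worth flagging is that the paper's definition of a dual frame includes both reconstruction identities, but in $\RR^n$ both identities reduce to the same equation once the scalar $a_i^2$ is absorbed into $\langle\cdot,\cdot\rangle$, so no extra work is needed.
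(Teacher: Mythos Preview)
Your proof is correct and is exactly the approach the paper takes: the corollary is stated without its own proof, as it follows immediately from the identity $\sum_{i=1}^m\langle x,a_ix_i\rangle a_ix_i = \sum_{i=1}^m\langle x,x_i\rangle a_i^2x_i$ displayed in the proof of the preceding theorem. You have simply spelled out both implications of that same computation.
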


\begin{remark}
	There are non-scalable  frames  (for example, non-orthogonal bases of $\RR^n$) whose all duals are not scalable, and also  examples of  non-scalable  frames whose  canonical dual frame is scalable.
\end{remark}


\begin{ex} Let $\mathcal{X}$ be a frame for $\RR^2$ whose the vectors are the columns of the following matrix:
\[
  \mathcal{X} =
  \left[ {\begin{array}{ccc}
    2 & 1 & 1\\
    1 & 2 & 1\\
  \end{array} } \right].
\] 
This frame is not scalable since all the vectors lie in the first open quadrant. Its frame operator $S$ is
\[
  S =\mathcal{X}\mathcal{X}^*=
  \left[ {\begin{array}{cc}
    6 & 5 \\
    5 & 6 \\
  \end{array} } \right],
\] 

We have \[
  S^{-1}=
  \dfrac{1}{11}\left[ {\begin{array}{cc}
    6 & -5 \\
    -5 & 6 \\
  \end{array} } \right],
\] and 
\[
  S^{-1}\mathcal{X}=\dfrac{1}{11}
  \left[ {\begin{array}{ccc}
    7 & -4&1 \\
    -4 & 7&1 \\
  \end{array} } \right].
\] 
The columns of the matrix $S^{-1}\mathcal{X}$ form a scalable frame since the vectors do not lie in the same open quadrant for any choices of negating the vectors. 
\end{ex}

We will provide a classification of frames whose canonical dual frame is scalable.  
First, we  consider the scalability of frames under invertible operators. 
\begin{theorem}\label{invertible}
	Let $\mathcal{X}=\{x_i\}_{i=1}^m$ be a frame for $\RR^n$ and $T$ be an invertible operator on $\RR^n$. The following are equivalent: 
\begin{enumerate}
\item $\{Tx_i\}_{i=1}^m$ is scalable with scaling $\{a_i\}_{i=1}^m$.
\item The frame operator for $\{a_ix_i\}_{i=1}^m$ is $(T^*T)^{-1}$.
\end{enumerate}
\end{theorem}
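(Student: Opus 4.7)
The plan is to reduce both directions to a single identity: the frame operator of $\{a_i T x_i\}_{i=1}^m$ equals $T S_a T^*$, where $S_a := \sum_{i=1}^m a_i^2 x_i x_i^*$ is the frame operator of $\{a_i x_i\}_{i=1}^m$. Once this identity is in hand, the equivalence is a one-line manipulation, so the main work is setting it up cleanly and recalling the paper's convention that ``scalable with scaling $\{a_i\}$'' refers to the scaling that produces a Parseval frame (as used in the proof preceding this theorem).

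For $(1)\Rightarrow(2)$, I would first note that by the definition of the frame operator (and the fact that $T$ is linear),
\[
\sum_{i=1}^m (a_i T x_i)(a_i T x_i)^* \;=\; T\!\left(\sum_{i=1}^m a_i^2 x_i x_i^*\right)\!T^* \;=\; T S_a T^*.
\]
Since $\{a_i T x_i\}_{i=1}^m$ is (by assumption, scaled to) a Parseval frame, its frame operator is $I$, so $T S_a T^* = I$. Using that $T$ is invertible, we left-multiply by $T^{-1}$ and right-multiply by $(T^*)^{-1}$ to obtain $S_a = T^{-1}(T^*)^{-1} = (T^*T)^{-1}$, which is exactly (2).

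For $(2)\Rightarrow(1)$, I would simply run the argument in reverse: given $S_a = (T^*T)^{-1}$, multiply on the left by $T$ and on the right by $T^*$ to get $T S_a T^* = T(T^*T)^{-1}T^* = I$. By the identity noted above, this says precisely that the frame operator of $\{a_i T x_i\}_{i=1}^m$ is the identity, so $\{a_iTx_i\}_{i=1}^m$ is Parseval, i.e.\ $\{Tx_i\}_{i=1}^m$ is scalable via $\{a_i\}$.

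There is no substantive obstacle — the ``hard part'' is merely bookkeeping to make sure that the Parseval convention is used consistently (otherwise one would only obtain $S_a = c(T^*T)^{-1}$ for some $c>0$). Given the conventions adopted earlier in this section (see the proof of the preceding theorem, where the scaling constants are chosen so that $\{a_ix_i\}_{i=1}^m$ is Parseval), this issue does not arise and the proof reduces to the calculation above.
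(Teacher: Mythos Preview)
Your proof is correct and follows essentially the same approach as the paper: both establish the identity that the frame operator of $\{a_iTx_i\}_{i=1}^m$ is $TS_aT^*$ (you via outer products, the paper via $\sum_i\langle x,a_iTx_i\rangle a_iTx_i=TS_aT^*x$) and then observe that $TS_aT^*=I$ is equivalent to $S_a=(T^*T)^{-1}$ by invertibility of $T$. The only cosmetic difference is that the paper packages both directions into a single ``if and only if'' line, whereas you spell out each implication separately.
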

\begin{proof}
	Let $S_1$ be the frame operator for $\{a_ix_i\}_{i=1}^m$. For $x\in \RR^n$ we have
	\[ \sum_{i=1}^m \langle x,a_iTx_i\rangle a_iTx_i=T\left ( \sum_{i=1}^m\langle T^*x,a_ix_i\rangle a_ix_i\right )=TS_1T^*x.
	\]
	So $\{Tx_i\}_{i=1}^m$ is scalable with scaling $\{a_i\}_{i=1}^m$ if and only if $TS_1T^*=I$. That is, $S_1=T^{-1}(T^*)^{-1}=(T^*T)^{-1}$.
\end{proof}

\begin{corollary} Let $\mathcal{X}=\{x_i\}_{i=1}^m$ be a frame for $\RR^n$ and $T$ be an invertible operator on $\RR^n$. If $\mathcal{X}$ and $T(\mathcal{X})$ are scalable with the same scaling constants, then $T$ is unitary.
\end{corollary}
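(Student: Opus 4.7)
The plan is to apply Theorem \ref{invertible} directly, playing the two hypotheses against each other. First I would take the hypothesis that $T(\mathcal{X}) = \{Tx_i\}_{i=1}^m$ is scalable with scaling $\{a_i\}_{i=1}^m$ and read it as statement $(1)$ of Theorem \ref{invertible}. The equivalent statement $(2)$ then tells me that the frame operator of $\{a_i x_i\}_{i=1}^m$ is exactly $(T^*T)^{-1}$.

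Next I would use the second hypothesis, that $\mathcal{X}$ itself is scalable with the same constants $\{a_i\}_{i=1}^m$. This is just the case $T = I$ of Theorem \ref{invertible} (or simply the definition of scalability as used in its proof), and it says that $\{a_i x_i\}_{i=1}^m$ is a Parseval frame; equivalently its frame operator equals the identity $I$.

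Equating the two descriptions of the same frame operator gives $(T^*T)^{-1} = I$, hence $T^*T = I_n$, which is precisely the condition that $T$ be unitary on $\RR^n$.

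I do not anticipate a real obstacle: the corollary is a one-line consequence obtained by invoking Theorem \ref{invertible} twice, once with the operator $T$ and once with the identity. The only subtle point worth flagging is that the phrase \textit{scalable with scaling $\{a_i\}$} is being used here, as in the proof of Theorem \ref{invertible}, to mean that the scaled system is Parseval (tight with constant $1$); without this normalization the same argument would only yield $T^*T = cI$ for some $c>0$, i.e., that $T$ is a positive scalar multiple of a unitary rather than unitary on the nose.
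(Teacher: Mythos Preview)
Your argument is correct and is exactly what the paper does: it simply says the corollary is immediate from Theorem \ref{invertible}, which is precisely the two applications (with $T$ and with $I$) you spell out. Your closing remark about the Parseval normalization is also apt, since the proof of Theorem \ref{invertible} implicitly uses $TS_1T^*=I$.
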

\begin{proof} This is immediate from Theorem \ref{invertible}. 
\end{proof}
The following theorem classifies when the canonical dual frame is scalable. Note that we do not require the original frame to be scalable.

\begin{theorem}\label{T3}
	Let $\mathcal{X}=\{x_i\}_{i=1}^m$ be a frame for $\RR^n$ with frame operator $S$. The following are equivalent.
	\begin{enumerate}
		\item $\{S^{-1}x_i\}_{i=1}^m$ is scalable with scaling $\{a_i\}_{i=1}^m$.
		\item There are constant $\{a_i\}_{i=1}^m$ so that the frame operator for $\{a_ix_i\}_{i=1}^n$ is $S^2$.		
\item There are constant $\{a_i\}_{i=1}^m$ so that the frame $\{a_iS^{-1/2}x_i\}_{i=1}^m$ has S as its frame operator.
	\end{enumerate}
\end{theorem}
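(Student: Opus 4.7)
The plan is to extract $(1)\Leftrightarrow(2)$ as an immediate application of Theorem \ref{invertible}, and then to verify $(2)\Leftrightarrow(3)$ by a direct computation of the frame operator of the similarity-transformed system $\{a_i S^{-1/2}x_i\}_{i=1}^m$.

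First, for $(1)\Leftrightarrow(2)$, I would apply Theorem \ref{invertible} to the invertible operator $T=S^{-1}$. Since $S$ is self-adjoint, so is $S^{-1}$, hence $T^*T=S^{-2}$ and $(T^*T)^{-1}=S^2$. Theorem \ref{invertible} then says that $\{Tx_i\}_{i=1}^m=\{S^{-1}x_i\}_{i=1}^m$ is scalable with scaling $\{a_i\}_{i=1}^m$ if and only if the frame operator of $\{a_ix_i\}_{i=1}^m$ equals $(T^*T)^{-1}=S^2$, which is exactly statement (2).

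For $(2)\Leftrightarrow(3)$, let $S_1$ denote the frame operator of $\{a_ix_i\}_{i=1}^m$. A direct computation gives, for every $x\in\RR^n$,
\[
\sum_{i=1}^m \langle x, a_i S^{-1/2}x_i\rangle\, a_i S^{-1/2}x_i \;=\; S^{-1/2}\sum_{i=1}^m \langle S^{-1/2}x, a_ix_i\rangle\, a_ix_i \;=\; S^{-1/2}S_1 S^{-1/2}x,
\]
where we used self-adjointness of $S^{-1/2}$. Hence the frame operator of $\{a_i S^{-1/2}x_i\}_{i=1}^m$ is $S^{-1/2}S_1 S^{-1/2}$, and this equals $S$ if and only if $S_1=S^{1/2}\cdot S\cdot S^{1/2}=S^2$, which is exactly (2).

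No serious obstacle is expected; the only things to be careful about are (a) verifying that the appropriate powers of $S$ are self-adjoint and commute with each other (which follows from the spectral theorem applied to the positive self-adjoint operator $S$), and (b) remembering that Theorem \ref{invertible} applies for \emph{any} choice of invertible $T$, including $T=S^{-1}$, which yields the cleanest route to $(1)\Leftrightarrow(2)$ without needing to re-derive the computation from scratch.
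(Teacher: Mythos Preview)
Your proposal is correct and follows essentially the same approach as the paper: $(1)\Leftrightarrow(2)$ via Theorem~\ref{invertible} with $T=S^{-1}$, and $(2)\Leftrightarrow(3)$ by the direct computation showing the frame operator of $\{a_iS^{-1/2}x_i\}_{i=1}^m$ equals $S^{-1/2}S_1S^{-1/2}$. Your treatment of $(2)\Leftrightarrow(3)$ is slightly more compact than the paper's, which splits it into two implications, but the argument is the same.
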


\begin{proof}
$(1)\Leftrightarrow (2)$: This follows from Theorem \ref{invertible} with $T=S^{-1}$.
	
	
	\noindent $(2)\Rightarrow (3)$: Given (2), for every $x\in \RR^n$ we have
	\begin{align*}
		\sum_{i=1}^m\langle x,a_iS^{-1/2}x_i\rangle a_iS^{-1/2}x_i&=S^{-1/2}\sum_{i=1}^m\langle S^{-1/2}x,a_ix_i\rangle a_ix_i\\
		&= S^{-1/2}S^2(S^{-1/2}(x))\\
                  &=Sx.
	\end{align*}

	\vskip12pt
	
	\noindent $(3)\Rightarrow (2)$: Let $S_1$ be the frame operator for $\{a_ix_i\}_{i=1}^m$. Given (3), for every $x\in \RR^n$ we have
	\begin{align*}
		Sx&=\sum_{i=1}^m\langle x,a_iS^{-1/2}x_i\rangle a_iS^{-1/2}x_i\\
		&= S^{-1/2}\sum_{i=1}^m\langle S^{-1/2}x,a_ix_i\rangle a_ix_i\\
		&= S^{-1/2}S_1S^{-1/2}(x).\\
	\end{align*}
	So $S=S^{-1/2}S_1S^{-1/2}$ and hence $S_1=S^2$.
\end{proof}

\begin{remark}
	(1) Note that $S^2$ is the frame operator for the frame $\{S^{1/2}x_i\}_{i=1}^m$.

	(2) If $\{S^{-1}x_i\}_{i=1}^m$ can be scaled in more than one way, for example if  $\{a_iS^{-1}x_i\}_{i=1}^m$ and $\{b_iS^{-1}x_i\}_{i=1}^m$
	are Parseval frames, then $\{a_ix_i\}_{i=1}^m$ and $\{b_ix_i\}_{i=1}^m$ have the same frame operator.
\end{remark}

\begin{corollary} 
Let $\mathcal{X}=\{x_i\}_{i=1}^m$ be a frame for $\RR^n$ with   frame operator $S$ and let $D$ be the diagonal operator with $\{a_1, a_2, \ldots, a_m\} $ on its diagonal entries. Then 
$\{S^{-1}x_i\}_{i=1}^m$ is scalable with scaling $\{a_i\}_{i=1}^m$ if and only if $\mathcal{X}(D^2-G)\mathcal{X}^*=0$, where $G$ is the Grammian operator of $\mathcal{X}$.
\end{corollary}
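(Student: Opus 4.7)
The plan is to invoke the equivalence $(1)\Leftrightarrow(2)$ in Theorem \ref{T3} and then translate the operator identity ``frame operator of $\{a_ix_i\}_{i=1}^m$ equals $S^2$'' into a matrix equation involving $\mathcal{X}$, $D$, and the Gramian $G=\mathcal{X}^*\mathcal{X}$.

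First, I would identify the frame operator of $\{a_ix_i\}_{i=1}^m$ in terms of the synthesis matrix. The synthesis matrix of this scaled system is $\mathcal{X} D$, since multiplying $\mathcal{X}$ on the right by the diagonal matrix $D$ scales the $i$-th column by $a_i$. Hence the frame operator of $\{a_ix_i\}_{i=1}^m$ equals
\[
(\mathcal{X} D)(\mathcal{X} D)^{*}=\mathcal{X} D D^{*}\mathcal{X}^{*}=\mathcal{X} D^{2}\mathcal{X}^{*},
\]
using that $D$ is diagonal with real entries.

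Next, I would rewrite $S^2$ in Gramian form. Since $S=\mathcal{X}\mathcal{X}^{*}$, we have
\[
S^{2}=\mathcal{X}\mathcal{X}^{*}\mathcal{X}\mathcal{X}^{*}=\mathcal{X} G \mathcal{X}^{*}.
\]

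Combining these, by Theorem \ref{T3} the frame $\{S^{-1}x_i\}_{i=1}^m$ is scalable with scaling $\{a_i\}_{i=1}^m$ if and only if
\[
\mathcal{X} D^{2}\mathcal{X}^{*}=\mathcal{X} G \mathcal{X}^{*},
\]
which rearranges to $\mathcal{X}(D^{2}-G)\mathcal{X}^{*}=0$, as required. There is essentially no obstacle here: both directions are routine once the frame operator of the scaled system and $S^{2}$ are put into the common sandwich form $\mathcal{X}(\cdot)\mathcal{X}^{*}$. The only care needed is to observe that scaling the frame corresponds to right multiplication of $\mathcal{X}$ by $D$, and that $D^{*}=D$.
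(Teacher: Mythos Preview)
Your proof is correct and follows essentially the same approach as the paper: invoke Theorem \ref{T3} to reduce to $S_1=S^2$, compute $S_1=(\mathcal{X}D)(\mathcal{X}D)^*=\mathcal{X}D^2\mathcal{X}^*$ and $S^2=\mathcal{X}G\mathcal{X}^*$, and subtract. There is no meaningful difference between your argument and the paper's.
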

\begin{proof}
We have $S=\mathcal{X X}^*$. By the Theorem \ref{T3}, $\{a_iS^{-1}x_i\}$ is Parseval if and only if $S_1=S^2$, where $S_1$ is the frame operator of $\{a_ix_i\}_{i=1}^m$. 

But $$S_1=(\mathcal{X}D)(\mathcal{X}D)^*=\mathcal{X}D^2\mathcal{X}^*$$
Thus, $S_1=S^2$ is equivalent to $\mathcal{X}D^2\mathcal{X}^*=(\mathcal{X}\mathcal{X}^*)^2=\mathcal{X}G\mathcal{X}^*$, or  $\mathcal{X}(D^2-G)\mathcal{X}^*=0$. 
\end{proof}

\begin{proposition}\label{P1}
	There are scalable frames whose canonical dual frames are not scalable.
\end{proposition}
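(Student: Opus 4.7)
The plan is to establish the proposition by exhibiting an explicit counterexample in $\RR^2$, then separately verifying scalability of $\mathcal{X}$ and non-scalability of its canonical dual. The candidate frame is $\mathcal{X}=\{x_1,x_2,x_3\}$ with $x_1=(2,0)$, $x_2=(0,1)$, and $x_3=(1,1)$.

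First I would verify that $\mathcal{X}$ is scalable: the constants $a_1=1/2$, $a_2=1$, $a_3=0$ turn $\mathcal{X}$ into $\{(1,0),(0,1)\}$, the standard orthonormal basis, which is Parseval. Hence $\mathcal{X}$ is scalable (allowing zero scalars as in the definition).

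Next I would use Theorem \ref{T3} to rule out scalability of the canonical dual. A direct calculation gives the frame operator
\[ S=\mathcal{X}\mathcal{X}^*=\begin{pmatrix}5&1\\1&2\end{pmatrix}, \qquad S^2=\begin{pmatrix}26&7\\7&5\end{pmatrix}. \]
By Theorem \ref{T3}, $\{S^{-1}x_i\}_{i=1}^3$ is scalable if and only if there exist non-negative constants $a_1,a_2,a_3$ for which the frame operator of $\{a_ix_i\}_{i=1}^3$ equals $S^2$. Writing this requirement coordinate-wise,
\[ \sum_{i=1}^3 a_i^2\, x_ix_i^*=\begin{pmatrix}4a_1^2+a_3^2 & a_3^2\\ a_3^2 & a_2^2+a_3^2\end{pmatrix}=\begin{pmatrix}26&7\\7&5\end{pmatrix}, \]
the $(1,2)$ entry forces $a_3^2=7$, and then the $(2,2)$ entry yields $a_2^2=5-7=-2<0$, which is impossible. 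This contradiction shows that the canonical dual is not scalable and completes the proof.

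The main ``obstacle'' is really just guessing a workable small example; once the frame is in hand, the verification is two short linear algebra checks. As a sanity check I could instead compute the canonical dual vectors directly, obtaining $y_1=(4,-2)/9$, $y_2=(-1,5)/9$, and $y_3=(1,4)/9$, and then apply Theorem \ref{thm-classify}(2): requiring the reduced diagram matrix of $\{y_i\}_{i=1}^3$ to annihilate a non-negative coefficient vector $(b_1^2,b_2^2,b_3^2)$ reduces to the system $24b_1^2+57b_2^2=0$, whose only solution is trivial. This reconfirms non-scalability of the canonical dual.
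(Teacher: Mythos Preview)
Your proof is correct. Both your argument and the paper's follow the same strategy: exhibit an explicit scalable frame and then use Theorem~\ref{T3} to show that no choice of constants $\{a_i\}$ can make the frame operator of $\{a_ix_i\}$ equal to $S^2$.

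The difference lies in the example. The paper works in $\RR^n$ with a $2n$-vector frame built from a Hadamard matrix, chosen so that $S$ is diagonal in the standard basis; the obstruction then comes from an inconsistent linear system in the eigenvalues. Your example is smaller and more direct: three vectors in $\RR^2$, with $S$ non-diagonal, where the $(1,2)$ and $(2,2)$ entries of the equation $\sum a_i^2 x_ix_i^*=S^2$ already force $a_2^2<0$. Your approach buys simplicity and requires no auxiliary structure (no Hadamard matrix), while the paper's construction yields a family of examples in arbitrarily high dimension. One small wording point: scaling with $a_3=0$ produces $\{(1,0),(0,1),(0,0)\}$ rather than $\{(1,0),(0,1)\}$, but this is still Parseval, so the conclusion is unaffected.
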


\begin{proof}
	Let $[x_{ij}]_{i,j=1}^n$ be a unitaty Hadamard matrix (i.e., a matrix with entries $\pm 1/\sqrt{n}$) with row vectors $\{x_i\}_{i=1}^n$. Let
	\[ y_i=(x_{i,1},x_{i,2},\ldots,x_{i,n-2},2x_{i,n-1},3x_{i,n}) \mbox{ for all } i\in [n].\]
	Then $\{x_i\}_{i=1}^n \cup \{y_i\}_{i=1}^n$ is a frame for $\RR^n$. Let $S$ be its frame operator. We see that $Se_n=2e_n$ for $n=1, \ldots, n-2$, $Se_{n-1}=5e_{n-1}$, and $Se_n=10e_n$, and so  the unit vectors $e_n $ are eigenvectors of $S$  with eigenvalues $2,2,\ldots,2,5,10$.
	This frame is scalable since it contains the Parseval frame $\{x_i\}_{i=1}^n$.
	We proceed by way of contradiction by assuming the canonical dual frame is scalable. Then by Theorem \ref{T3},
	there are constants $\{a_i\}_{i=1}^n\cup \{b_i\}_{i=1}^n$ so that $\{a_ix_i\}_{i=1}^n\cup \{b_iy_i\}_{i=1}^n$ is a frame with frame operator
	$S^2$. It follows that this frame has the unit vectors $e_n $ as eigenvectors, with eigenvalues:
	\[  \sum_{i=1}^{n}a_i^2+\sum_{i=1}^nb_i^2=4, \mbox{ for the first $(n-2)$ eigenvectors}\]
	and
	\[\sum_{i=1}^na_i^2+4\sum_{i=1}^nb_i^2=
	25, \mbox{ for the $(n-1)$-th eigenvector, }\]
	and
	\[\sum_{i=1}^na_i^2+9\sum_{i=1}^nb_i^2= 100\mbox{ for the last eigenvector }.
	\]
	So 
	\[ \sum_{i=1}^na_i^2=4-\sum_{i=1}^nb_i^2,\]
	and substituting this into the next two equations yields:
	\[ 25=4-\sum_{i=1}^nb_i^2+4\sum_{i=1}^nb_i^2\mbox{ and so }\sum_{i=1}^nb_i^2=\frac{21}{3}.\]
	\[ 100=4-\sum_{i=1}^nb_i^2+9\sum_{i=1}^nb_i^2\mbox{ and so }\sum_{i=1}^nb_i^2=12.
	\]
	This contradiction completes the proof.
\end{proof}

\begin{remark} (1) Propostion \ref{P1} also shows that there are non-scalable frames whose canonical dual frames are scalable. Namely, take the canonial dual frame of the   frame in Proposition \ref{P1}.

(2)   Part (3) of Theorem \ref{T3} is interesting. Scaling means multiplying the frame vectors by a constant to get a Parseval frame.  Theorem \ref{T3}(3) says that
	we are to take the canonical Parseval frame and scale it so that the new frame has the original frame operator as its frame operator.
\end{remark}


\end{document}